\documentclass{article}
\usepackage{graphicx}
\usepackage{setspace}
\usepackage{amsfonts}
\usepackage{amssymb}
\usepackage{amsmath}
\usepackage{amsthm}
\usepackage{mathtools}
\usepackage{cases}
\usepackage{graphicx}
\usepackage[shortlabels]{enumitem}
\usepackage{tikz}
\usetikzlibrary{arrows.meta,calc,decorations.pathreplacing}
\newtheorem{thm}{Theorem}[section]

\newtheorem{lem}[thm]{Lemma}

\newtheorem*{thm2.4}{Theorem 2.4}

\theoremstyle{definition}
\newtheorem{defn}[thm]{Definition}

\newtheorem{rem}[thm]{Remark}
\newtheorem*{rem*}{Remark}
\newtheorem*{rems*}{Remarks}

\newtheorem*{ex*}{Example}

\title{Baker domains with arbitrarily slow controlled rate of escape}
\author{Benjamin Handsaker-Whetton}
\date{The Open University, Walton Hall, Milton Keynes, UK, MK7 6AA
benjamin.handsaker-whetton@open.ac.uk}

\begin{document}

\maketitle

\begin{abstract}
    Rempe and Rippon gave a general result which enables the construction of a transcendental entire function with a Baker domain, in which the iterates tend to infinity at a prescribed arbitrarily slow rate. Here we construct a transcendental entire function with a Baker domain, in which the iterates are controlled from both above and below in a prescribed arbitrarily slow but regular manner. The proof uses distortion theorems for conformal mappings due to Ahlfors and Warschawski, and an approximation theorem due to Rempe and Rippon. 
\end{abstract}

\section{Introduction}

\noindent Let $f : \mathbb{C} \rightarrow {\mathbb{C}}$ be an entire function and define the $n$th iterate of $f$ to be $f^n$, for $n = 0,1,2,...$. The \textit{Fatou set} $F(f)$ is defined to be the set of all points $z \in \mathbb{C}$ such that $\{ f^n : n \in \mathbb{N} \}$ forms a normal family in some neighbourhood of $z$. The \textit{Julia set} $J(f)$ is the complement of this set, $J(f) = {\mathbb{C}} \backslash F(f)$. We refer the reader to \cite{Bergweiler} for an introduction to the properties of these sets.

A \textit{Fatou component} of $f$ is a connected component of $F(f)$. If $U$ is a Fatou component of $f$, then $U_n$ is defined to be the Fatou component of $f$ such that $U_n \supset f^n(U)$. We call a Fatou component $U$ periodic if there exists $\ n \in \mathbb{N}$ such that $U_n = U$. For a transcendental entire function $f$, a periodic Fatou component $U$ is said to be a \textit{Baker domain} if, for some $k \in \mathbb{N}$, $f^{nk}(z) \rightarrow \infty$ as $n \rightarrow \infty$, for all $z \in U$. There are many examples of Baker domains; see for example \cite{BergweilerZhangBakerDomains} \cite{BaranskiKrzysztofFagellaJarque}, with the first example due to Fatou \cite[Example I]{FatousFunction}. For more background on Baker domains we refer the reader to \cite{PhilSurvey}. 

If you take any two points $z, z'$ in a Baker domain $U$, then there exists $C > 1$ such that $|f^n(z)| \leq C |f^n(z')|$ for large $n \in \mathbb{N}$ \cite[Lemma 7 (5)]{Bergweiler}. So points in a Baker domain do not tend to $\infty$ rapidly. This paper gives a method for constructing Baker domains in which the points tend to infinity at a prescribed arbitrarily slow rate, controlled both above and below.

\newpage

The \textit{escaping set} of a function $f$, denoted by $I(f)$, is defined as follows:
\begin{equation}
    \nonumber I(f) = \{ z : f^n(z) \rightarrow \infty \ \textnormal{as} \ n \rightarrow \infty \}.
\end{equation}

\noindent The escaping set for entire functions was first studied in detail by Eremenko \cite{Eremenko}, and many authors have studied points that escape at different rates, with the so-called fast escaping set playing a key role in complex dynamics \cite{GPFast}. Recently, Bergweiler and Rempe published a survey \cite{BergRempeSurvey} giving a summary of many of the main results about escaping sets and organising many open questions surrounding the topic. Rippon and Stallard studied points with slow rates of escape in \cite{SlowEscape} and introduced various subsets of the escaping set. In particular, for any positive sequence $a = (a_n)$ such that $a_n \rightarrow \infty$ as $n \rightarrow \infty$, they defined
\begin{equation}
    \nonumber I^a(f) = \{ z \in I(f) : |f^n(z)| = O(a_n) \ \textnormal{as} \ n \rightarrow \infty \},
\end{equation}

\noindent where the rate of escape is bounded above, and
\begin{equation}\label{Iaa}
    {I}_a^a(f) = \{ z \in I(f): \exists \; C = C(z) > 1 \; \textnormal{s.}\textnormal{t.} \; \frac{a_n}{C} \leq |f^n(z)| \leq Ca_n, \; \textnormal{for} \; n \geq N(z) \},
\end{equation}

\noindent where the rate of escape is controlled, both above and below.

Rempe and Rippon \cite[Theorem 1.1.1]{Lasse} showed that, given an arbitrary sequence $a = (a_n)$ of positive numbers such that $a_n \rightarrow \infty$ as $n \rightarrow \infty$, no matter how slowly, there exists a Baker domain $U$ such that for every $z \in U$, $z \in I^a(f)$. To do this they adapted an approximation theorem of Arakelian \cite{AApprox}, to allow them to control approximation by entire functions on simply connected sets.

It is natural to ask whether it is possible to construct Baker domains in which points belong to $I_a^a(f)$ where $a = (a_n)$ is a given positive increasing sequence, tending to $\infty$ arbitrarily slowly. We introduce new techniques which enable us to prove the following result.

\begin{thm}
    \vspace{1mm}
        \textit{Let $a = (a_n)$ be a positive strictly increasing sequence such that $a_n \rightarrow \infty$ as $n \rightarrow \infty$, $a_0 = 0$, and for some $n_0 \in \mathbb{Z}$ with $n_0 \geq 2$}
        
        \begin{equation}\label{TheoremCondition}
            1 \leq \frac{a_{n-1} - a_{n-2}}{a_n - a_{n-1}} < 2, \ \textnormal{for} \ n \geq n_0.
        \end{equation}
        
        \textit{Then there exists an entire function $f$ with a Baker domain $U$ such that, for any $z \in U$, there exists  $C = C(z) > 1$ and $N =N(z) \in \mathbb{N}$ such that \[\frac{a_n}{C} \leq |f^n(z)| \leq Ca_n, \ \textnormal{for all} \ n \geq N.\]}
\end{thm}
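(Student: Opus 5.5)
We construct $f$ by approximating a model map on a simply connected domain, following the strategy of Rempe and Rippon \cite{Lasse}. We take a domain $V$ on which the model map $g$ sends $V$ into itself and moves points outward in a controlled way. We then use their approximation theorem to get an entire $f$ with $|f-g|$ small on $V$.

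\textbf{The model.} Interpolate $a$ by an increasing continuous function $\phi:[0,\infty)\to[0,\infty)$ with $\phi(n)=a_n$, piecewise linear on each $[n-1,n]$, and set $\delta_n=a_n-a_{n-1}$. Condition \eqref{TheoremCondition} says $\delta_n$ is non-increasing and $\delta_n>\delta_{n-1}/2$ for $n\ge n_0$. So the increments decay at most geometrically, with ratio controlled uniformly. Define a strip-like domain $S$ around the positive real axis whose width near $x=a_n$ is comparable to $\delta_n$, for example $S=\{x+iy: x>a_{n_0},\ |y|<c\,h(x)\}$ with $h$ interpolating the $\delta_n$. Let $\psi:S\to H=\{\operatorname{Re} w>0\}$ or to a standard strip be a Riemann map, with $\psi$ real on the real axis.

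The model map is $g=\psi^{-1}\circ T\circ\psi$, where $T$ is a translation (in strip coordinates) by $1$. Then $g$ is a conformal self-map of $S$ into itself, and $g^n(z)\to\infty$ inside $S$.

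\textbf{Key estimate.} We need $\operatorname{Re}\psi^{-1}(t+n)\asymp a_n$ uniformly for $t$ in compact sets. This is where we use the Ahlfors distortion theorem and Warschawski's estimate. In strip coordinates $\psi(x)\approx\pi\int^x dt/\theta(t)$, where $\theta$ is the width. The integral of $1/\text{width}$ over $[a_{n-1},a_n]$ equals roughly $\delta_n/(c\delta_n)$, which is constant. So $\psi(a_n)\approx n\cdot\text{const}+O(1)$, and choosing $c$ normalises the constant.

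Warschawski's estimate needs bounded slope of the width function, i.e. $|h'|$ small. Since $\delta_n$ decreases and $\delta_n/\delta_{n+1}<2$, the slope of $h$ on $[a_{n-1},a_n]$ is $(\delta_{n+1}-\delta_n)/\delta_n$, which is bounded. Shrinking $c$ makes $\int h'^2/h$ finite-ish and the error terms $O(1)$. This step, obtaining $|\psi(x+iy)-\pi\int^x dt/\theta|=O(1)$ uniformly in $S'$, is the main obstacle. I would try to get summability of the error from the regularity in \eqref{TheoremCondition}, possibly by choosing $h$ smoother, e.g. with $\log h$ Lipschitz.

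Given this estimate, $\psi^{-1}(w+n)$ has real part in $[a_{n-K},a_{n+K}]$. The condition $\delta_n>\delta_{n-1}/2$ gives $a_{n+K}\le C_K a_n$, since the increments are monotone and $a_n\ge n\delta_n$.

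\textbf{Approximation and conclusion.} Apply the Rempe–Rippon approximation theorem on a slightly smaller subdomain $S'\subset S$, chosen so that the hyperbolic neighbourhood geometry is preserved. This gives an entire $f$ with $|f-g|\le\varepsilon(z)$ on $S'$, where the error is small relative to the local width $h$. Then $f(S'')\subset S''$ for a sub-strip $S''$.

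Working in $\psi$-coordinates, $\psi\circ f\circ\psi^{-1}$ differs from $w\mapsto w+1$ by a summable or uniformly small error, by Koebe distortion, since the error is small compared to the width. Hence $\operatorname{Re}\psi(f^n(z))=n+O(1)$. Also $S''$ lies in a Fatou component $U$ by Montel, and $U$ is a Baker domain because $f^n\to\infty$ there.

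Finally, the key estimate gives $a_n/C\le|f^n(z)|\le Ca_n$ for $z\in S''$. For general $z\in U$, some iterate enters $S''$, since $f^n\to\infty$ locally uniformly and the comparability of $|f^n(z)|$ for points of $U$ from \cite[Lemma 7]{Bergweiler} applies. This extends the bounds with a new $C(z)$ and index shift, using $a_{n+k}\asymp a_n$ again.
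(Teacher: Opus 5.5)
\textbf{Main gap: the model map has no margin.} With $T$ a translation of the strip, $g=\psi^{-1}\circ T\circ\psi$ is an automorphism of $S$, not a map of $S$ strictly into itself. In half-plane coordinates $\zeta=e^{w}$ it is $\zeta\mapsto e\zeta$, which maps $\overline{\mathbb{H}}$ onto itself. So the hypothesis $g(\overline V)\subset\varphi(\{\operatorname{Re}\zeta\ge 1\})$ of the Rempe--Rippon lemma fails, and that hypothesis is exactly what gives $f(V)\subset V$.

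Passing to a proportionally narrower sub-strip $S''$ does not create the missing margin. $T$ preserves every narrower strip, so points near the edges of $S''$ have no room. An error that is only ``small relative to the local width'' is a uniform error $\eta$ in strip coordinates, and its vertical effect can accumulate to order $n\eta$. Hence neither $f(S'')\subset S''$ nor $\operatorname{Re}\psi(f^n(z))=n+O(1)$ follows. The ``summable'' alternative is asserted, but nothing in your set-up produces it.

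The paper builds the margin into the model. It takes $h(\zeta)=\lambda\zeta+1$ with $\lambda=e^{\pi}$, so $h(\mathbb{H})\subset\{\operatorname{Re}\zeta>1\}$, and the approximation error is an absolute $\varepsilon$ in these half-plane coordinates. Then $f(S(a))\subset S(a)$ is immediate. The recursion $|F^n(1)-G^n(1)|\le\varepsilon+\lambda|F^{n-1}(1)-G^{n-1}(1)|$ gives $|F^n(1)/G^n(1)-1|\le\varepsilon/\lambda$, i.e.\ a deviation of at most $2\varepsilon/\lambda$ in strip coordinates for every $n$. This is the summability you wanted: an absolute error $\varepsilon$ at $|\zeta|\asymp\lambda^n$ becomes an error of order $\varepsilon\lambda^{-n}$ after taking $\operatorname{Log}$. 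If you want to keep a translation, the subdomain to use is $\varphi(\{\operatorname{Re}\zeta>1\})$, not a narrower strip. Re-uniformising turns $\zeta\mapsto e\zeta$ into $\xi\mapsto e\xi+(e-1)$, which is again a model of the paper's type.

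\textbf{Secondary issue: the key estimate asks for too much.} Your $O(1)$ estimate is stronger than the distortion theorems give, and stronger than needed. Warschawski's error term $\frac{\pi}{12}\int\theta'^2/\theta$ is, for your width $\theta\approx 2c\,h$, of order $c\sum_{i\le n}\varepsilon_i^2$ with $\varepsilon_i=\delta_{i-1}/\delta_i-1\in[0,1)$. This need not be bounded under the hypothesis of the theorem. For example, take $\varepsilon_i=\tfrac12$ for $i=2^k$ and $\varepsilon_i=0$ otherwise; then $a_n\asymp n^{1-\log_2(3/2)}\to\infty$ while $\sum_{i\le n}\varepsilon_i^2\asymp\log n$. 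Shrinking $c$ multiplies this term by $c$ but cannot make it bounded. It only makes it small relative to the main term $\pi\int^{a_n}dx/\theta$, which is of order $n/c$.

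That multiplicative control is all you need. The paper proves only $\pi n/4\le\Phi(a_n)\le 13\pi n/6$; its own error terms $\sum\varepsilon_i/2$ and $\frac1{12}\sum\varepsilon_i^2$ are likewise unbounded. It then uses $a_{Cn}\le Ca_n$, which follows from the increments being non-increasing, to get $a_n/5\le g^n(0)\le 5a_n$. So replace your window $[a_{n-K},a_{n+K}]$ by $[a_{\lfloor n/C\rfloor},a_{Cn}]$.

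Finally, you do not need to show (and have not shown) that orbits of arbitrary $z\in U$ enter $S''$. Bergweiler's comparability lemma applied to the compact set $\{0,z\}\subset U$ transfers the bounds directly.
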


\noindent We note that condition \eqref{TheoremCondition} is satisfied by many positive increasing sequences; for example $a_n = \log n$ and  $a_n = n^{\varepsilon}$ with $0 < \varepsilon \leq 1$. All sequences that satisfy \eqref{TheoremCondition} must satisfy $a_n \leq Cn$, $n = 1,2,3,...$ for some $C > 0$.

\begin{rem}
    Any iterated logarithm $\log\cdot\cdot\cdot\log(n)$ gives an example satisfying condition \eqref{TheoremCondition}. We can also show that under certain conditions, we can apply the logarithm to a given sequence and show that \eqref{TheoremCondition} holds. A proof of this can be found in Theorem 4.1.
\end{rem}

\begin{rem}
    In Theorem 4.2 we show that is it possible to find sequences $(a_n)$ tending to $\infty$ arbitrarily slowly and satisfying \eqref{TheoremCondition}.
\end{rem}

To prove Theorem 1.1, we begin by constructing an unbounded simply connected domain and a model function defined on that domain with the desired properties. The construction of the model function is rather delicate; we outline the key steps in Section 2. In Section 3 we apply the approximation result given by Rempe and Rippon \cite[Lemma 4.2]{Lasse}, to find an entire function that has almost exactly the same behaviour as the model function.

\textbf{Acknowledgements.} I would like to give my warmest thanks to Gwyneth Stallard and Phil Rippon, my supervisors, for their deep insight, continued feedback and support on this paper. I would also like to extend these thanks to Vasiliki Evdoridou, my secondary supervisor, for helping me with a few of the ideas.

\section{The model function}

In this section we construct an unbounded simply connected domain and a model function $g$ with the properties required in Theorem 1.1, for a sequence $(a_n)$ satisfying the hypotheses in Theorem 1.1 with $n_0 = 2$. We will show at the end of Section 3 that the general case can be deduced from the case when $n_0 = 2$.

We construct a domain $S(a)$ as shown in Figure 1. Our aim is to construct a model function $g: S(a) \rightarrow S(a)$ with $g^n(0) \approx a_n$.

\begin{figure}[ht]
\begin{tikzpicture}[scale=0.9125, every node/.style={font=\small}]

\draw[thick] (-0.5,0) -- (12,0);

\foreach \x\lab in {0/{a_0=0}, 2.5/{a_1}, 5/{a_2}, 7.5/{a_3}, 10/{a_4} }
{
    \draw[line width=0.95pt] (\x,0.15)--(\x,-0.15);
    \node[below=6pt] at (\x,0) {$\lab$};
}

\draw[line width=1.6pt]
(-0.4,2.8) -- (2.5,2.8) -- (5,2.3) -- (7.5,2) -- (10,1.8);

\node at (6.8,2.55) {$y=\phi(x)$};

\fill (2.5,2.8) circle (2pt);
\fill (5,2.3) circle (2pt);
\fill (7.5,2) circle (2pt);
\fill (10, 1.8) circle (2pt);

\draw[line width=1.6pt]
(-0.4,-2.8) -- (2.5,-2.8) -- (5,-2.3) -- (7.5,-2) -- (10,-1.8);

\node at (6.8,-2.55) {$y=-\phi(x)$};

\fill (2.5,-2.8) circle (2pt);
\fill (5,-2.3) circle (2pt);
\fill (7.5,-2) circle (2pt);
\fill (10, -1.8) circle (2pt);

\node[scale=1.05] at (0.8,2.2) {$S(a)$};

\draw[<->,thick] (2.5,2.35) -- (2.5,0.35);
\node[scale = 1.25][left] at (2.4,1.35) {$\frac{a_1}{2}$};
\draw[<->,thick] (2.5,-0.6) -- (2.5,-2.35);
\node[scale = 1.25][left] at (2.4,-1.35) {$\frac{a_1}{2}$};

\draw[<->,thick] (5,2.0) -- (5,0.35);
\node[scale = 1.25][left] at (4.9,1.18) {$\frac{a_2-a_1}{2}$};
\draw[<->,thick] (5,-0.6) -- (5,-1.8);
\node[scale = 1.25][left] at (4.9,-1.18) {$\frac{a_2-a_1}{2}$};

\draw[<->,thick] (7.5,1.7) -- (7.5,0.35);
\node[scale = 1.25][left] at (7.4,1.0) {$\frac{a_3-a_2}{2}$};
\draw[<->,thick] (7.5,-0.6) -- (7.5,-1.6);
\node[scale = 1.25][left] at (7.4,-1.0) {$\frac{a_3-a_2}{2}$};

\draw[<->,thick] (10,1.45) -- (10,0.35);
\node[scale = 1.25][left] at (9.9,0.9) {$\frac{a_4-a_3}{2}$};
\draw[<->,thick] (10,-0.6) -- (10,-1.45);
\node[scale = 1.25][left] at (9.9,-0.9) {$\frac{a_4-a_3}{2}$};

\draw[dashed] (10,1.8) -- (12,1.8);
\draw[dashed] (10,-1.8) -- (12,-1.8);

\end{tikzpicture}

\caption{The domain $S(a)$}

\end{figure}

As shown in Figure 1, $S(a)$ is an unbounded domain lying between two piecewise linear curves with equations $y = \pm \phi(x)$.

In Section 3 we will use approximation to obtain an entire function $f$ which behaves in a similar way to $g$ inside $S(a)$, giving us a Baker domain with the desired properties.

Our choice of the domain $S(a)$ is influenced by existing results on the geometric nature of Baker domains; for example, \cite[Theorem 1.1]{BakerDomains2} shows that, for a Baker domain $U$ of a function $f$, the distance of $f^n(z)$ from the boundary of $U$ is closely related to the size of $|f^{n+1}(z) - f^n(z)|$.

In Section 2.1 we give the construction of $g$ and then in Sections 2.2 and 2.3 we carry out the calculations to show that $g$ has the required properties.

\subsection{Construction of the model function}

We start by taking a strip $S$ centred on the real axis and of height $\pi$ and mapping this into itself under the map $z \mapsto z + 1$.

The plan is to use a Riemann map to transfer this to a  self-map of $S(a)$ with the desired properties. The key here is that the hyperbolic distance between $a_n$ and $a_{n+1}$ in $S(a)$ is similar to the hyperbolic distance between $n$ and $n+1$ in~$S$.

We need to refine this rough idea to ensure that we end up with a function that maps $S(a)$ strictly inside itself, whilst still allowing enough space for the approximation that we need to do in Section 3.

It is easier to carry out this refinement in the right half-plane $\mathbb{H} = \{ x + iy : x > 0 \}$ than in the strip $S$ so we end up with the following set-up as illustrated in Figure 2.

\begin{enumerate}
    \item Let $\Phi$ be the Riemann map from $S(a)$ to $S$ that fixes 0 and maps the real axis to the real axis.

    \item Let $h: \mathbb{H} \rightarrow \mathbb{H}$ be defined to be $h(z) = \lambda z + 1$, $\lambda > 1$.

    \item Let the model map $g: S(a) \rightarrow S(a)$ be defined to be 
    \begin{equation}
        \nonumber g(z) = \Phi^{-1}(\textnormal{Log}(h(\exp(\Phi(z))))),
    \end{equation}

\noindent where $\textnormal{Log}$ denotes the principal branch of the logarithm.

\end{enumerate}

\newpage

\begin{figure}[ht]

\begin{tikzpicture}[scale=0.725,>=Stealth]

\begin{scope}

\draw[thick] (0,-2.2)--(0,2.2);
\draw[thick] (0,0)--(5.5,0);

\node[left] at (0,0) {$0$};
\draw (1,-0.08)--(1,0.08);
\node[below] at (1,0) {$1$};
\node[right] at (0,2) {$\mathbb{H}$};

\node at (2.8,-2.4)
{$h(z) = \lambda z + 1, \lambda > 1$};

\draw[->,thick] (0.5,0.6) to[out=20,in=160] (2.0,0.6);
\node at (1.25,1) {$h$};

\end{scope}

\begin{scope}[xshift=4.2cm,yshift=-4cm]

\draw[->,thick] (-2.25,-0.6) to[out=110,in=250] (-2.25,1.2);
\node[left] at (-2.5,0.3) {exp};

\draw[->,thick] (0.5,1.2) to[out=-70,in=70] (0.5,-0.6);
\node[right] at (0.75,0.3) {Log};

\end{scope}

\begin{scope}[yshift=-6.5cm]

\draw[thick] (0,-2.2)--(0,2.2);
\draw[thick] (0,0)--(6,0);

\draw[very thick] (0,1.3)--(6,1.3);
\draw[very thick] (0,-1.3)--(6,-1.3);

\draw[dotted] (-2,1.3) -- (0,1.3);
\draw[dotted] (-2,-1.3) -- (0,-1.3);

\node[left] at (0,1) {$\frac{\pi}{2}$};
\node[left] at (0,-1) {$-\frac{\pi}{2}$};

\node at (3.0,-2.6)
{$\widetilde h(z) = z+c+\delta_n, c>0$};

\foreach \x\lab in {1.85/{u_{n-1}},3.7/{u_n},5.55/{u_{n+1}}}
{
\draw (\x,0.08)--(\x,-0.08);
\node[below] at (\x,0) {$\lab$};
}
\node at (0.65,-0.3) {$u_0\cdots$};

\node at (0.25,0.95) {$S$};

\draw[->,thick] (1.2,0.4) to[out=20,in=160] (2.4,0.4);
\node at (1.8,0.9) {$\widetilde h$};

\end{scope}

\begin{scope}[xshift=8.8cm,yshift=-6.5cm]

\draw[->,thick] (-1.4,0.5) to[out=20,in=160] (1.4,0.5);
\node at (0,1.1) {$\Phi^{-1}$};

\draw[->,thick] (1.4,-0.5) to[out=200,in=340] (-1.4,-0.5);
\node at (0,-1.1) {$\Phi$};

\end{scope}

\begin{scope}[xshift=12cm,yshift=-6.5cm]

\draw[thick] (0,0)--(6,0);

\foreach \x\lab in {0.45/{a_0=0\cdots},2/{a_{n-1}},4/{a_n},6/{a_{n+1}}}
{
\draw (\x,0.08)--(\x,-0.08);
\node[below] at (\x,0) {$\lab$};
}

\draw[densely dotted] (2,-2)--(2,2);
\draw[densely dotted] (4,-1.25)--(4,1.25);
\draw[densely dotted] (6,-0.9)--(6,0.9);
\draw[dotted] (0,2)--(2,2);
\draw[dotted] (0,-2)--(2,-2);

\draw[very thick] (2,2)--(4,1.25);
\draw[very thick] (2,-2)--(4,-1.25);

\draw[very thick] (4,1.25)--(6,0.9);
\draw[very thick] (4,-1.25)--(6,-0.9);

\draw[green!60!black,very thick] (2,1.25)--(4,1.25);
\draw[green!60!black,very thick] (2,-1.25)--(4,-1.25);

\draw[green!60!black,very thick] (4,0.9)--(6,0.9);
\draw[green!60!black,very thick] (4,-0.9)--(6,-0.9);

\draw[<->] (2.15,0.1)--(2.15,1);
\node[right] at (2.1,0.65) {$\frac{a_n-a_{n-1}}{2}$};

\draw[<->] (4.15,0.1)--(4.15,0.75);
\node[right] at (4.1,0.5) {$\frac{a_{n+1}-a_n}{2}$};

\node at (2.5,-2.5) {$g(z)=\Phi^{-1}(\textnormal{Log}(h(\exp(\Phi(z)))))$};

\node at (1,1.5) {$S(a)$};

\draw[->,thick] (0,0.5) to[out=20,in=160] (1,0.5);
\node at (0.5,0.9) {$g$};

\end{scope}

\end{tikzpicture}

\caption{Construction of the model function $g$; here $u_n = \Phi(a_n)$, for $n \geq 0$.}
\end{figure}

Our main goal in this section is to show that $g^n(0) \approx a_n$. In order to do this we must carry out some careful calculations to give bounds on the mapping $\Phi$.

\subsection{Properties of the conformal mapping}

In this subsection we establish properties of the conformal mapping $\Phi$, in particular obtaining estimates for the sequence $(\Phi(a_n))$. To be precise, we prove the following lemma.

First recall that, $y = \phi(x)$ is the equation of the upper boundary component of $S(a)$. We take $\phi_n(x) = \phi(x)$, $a_{n-1} \leq x \leq a_n$. Then $\phi_n(x)$ has gradient
\begin{equation}
    \nonumber \frac{\frac{a_n-a_{n-1}}{2} - \frac{a_{n-1}-a_{n-2}}{2}}{a_n - a_{n-1}} = \frac{1}{2} \left( 1 - \frac{a_{n-1}-a_{n-2}}{a_n - a_{n-1}} \right) = \frac{1}{2} \left( 1 - \frac{d_{n-1}}{d_n} \right) = \frac{-\varepsilon_n}{2},
\end{equation}

\noindent where $d_n = a_n - a_{n-1}$, $\varepsilon_n = \frac{d_{n-1}}{d_n} - 1$ and $0 \leq \varepsilon_n < 1$, because of \eqref{TheoremCondition}. Hence, 
\begin{equation}\label{LineEquations}
    \phi_n(x) = \frac{-\varepsilon_n}{2}(x-a_n) + \frac{d_n}{2}, \quad a_{n-1} \leq x \leq a_n.
\end{equation}

\noindent We note that the lower boundary of $S(a)$ has equation $y = -\phi(x)$.

\newpage

\begin{lem}
    \textit{Let $S$, $S(a)$ and $\Phi: S(a) \rightarrow S$ be as in Figure 2, and put $u_n = \Phi(a_n)$, $n \geq 1$. Then,}

    \begin{enumerate}[(a)]
        \item \begin{equation}\label{Lemma2.1(a)}
            \pi \left( n - \sum_{i=1}^n \frac{\varepsilon_i}{2} - 4 \right) \leq u_n \leq \pi \left( n + \frac{1}{12} \sum_{i=1}^n \varepsilon_i^2 + o(1) \right), \quad \textnormal{for } n \geq 1,
        \end{equation}
    \end{enumerate}

\textit{and there exists $N \geq 16$ such that}

\begin{enumerate}[(b)]
    \item \begin{equation}\label{Lemma2.1(b)}
        \frac{1}{4} \pi n \leq u_n \leq \frac{13}{6} \pi n, \quad \textnormal{for } n \geq N.
    \end{equation}
\end{enumerate}

\end{lem}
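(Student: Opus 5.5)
The plan is to estimate $u_n=\Phi(a_n)$ with the Ahlfors and Warschawski distortion theorems for conformal maps of strip-like domains onto the standard strip $S$ of width $\pi$. The domain $S(a)$ is bounded by $y=\pm\phi(x)$, so it has width $\theta(x)=2\phi(x)$. By \eqref{LineEquations}, on $[a_{k-1},a_k]$ we have $\theta(x)=d_k-\varepsilon_k(x-a_k)$, which increases from $d_k$ at $x=a_k$ to $d_k(1+\varepsilon_k)=d_{k-1}$ at $x=a_{k-1}$. Both estimates in (a) come from evaluating $\pi\int_0^{a_n}dx/\theta(x)$ piece by piece. On the $k$th piece, substitute $t=(x-a_{k-1})/d_k\in[0,1]$, so that $\theta=d_k(1+\varepsilon_k(1-t))$ and
\[
\int_{a_{k-1}}^{a_k}\frac{dx}{\theta(x)}=\int_0^1\frac{dt}{1+\varepsilon_k(1-t)}=\frac{\log(1+\varepsilon_k)}{\varepsilon_k}.
\]
For the first piece $[0,a_1]$ the width is constant, equal to $a_1=d_1$, so that piece contributes exactly $1$ and we may take $\varepsilon_1=0$. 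This is harmless, since the sums in (a) are only affected by bounded amounts.

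\textbf{Lower bound.} I would use Ahlfors' distortion theorem. If $\int_{x_1}^{x_2}dx/\theta(x)>2$, then for the strip map $\operatorname{Re}\Phi(x_2)-\operatorname{Re}\Phi(x_1)\ge\pi\int_{x_1}^{x_2}dx/\theta(x)-4\pi$. Apply it with $x_1=0$ and $x_2=a_n$; here $\Phi(0)=0$ and $\Phi$ is real on the real axis. Then use
\[
\frac{\log(1+\varepsilon)}{\varepsilon}\ge 1-\frac{\varepsilon}{2}\qquad(0\le\varepsilon<1).
\]
Summing over the pieces gives $u_n\ge\pi\bigl(n-\sum_{i=1}^n\varepsilon_i/2-4\bigr)$. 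For small $n$, where the integral may be at most $2$, the right-hand side is negative while $u_n>0$, so the bound holds trivially.

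\textbf{Upper bound.} I would use Warschawski's estimate for domains bounded by curves $y=\phi_\pm(x)$ of finite slope:
\[
\operatorname{Re}\Phi(x_2)-\operatorname{Re}\Phi(x_1)\le\pi\int_{x_1}^{x_2}\frac{1+\frac{1}{3}\theta'(x)^2/4}{\theta(x)}\,dx+o(1)\cdot(\text{const}).
\]
Equivalently, I would take the version with the correction term $\frac{\pi}{3}\int\phi'^2/\theta$, which applies for symmetric boundaries with bounded slope, $|\phi'|\le 1/2$. Here $\phi'=-\varepsilon_k/2$, so on piece $k$ the correction is $\frac{\pi}{12}\varepsilon_k^2\cdot\frac{\log(1+\varepsilon_k)}{\varepsilon_k}\le\frac{\pi}{12}\varepsilon_k^2$. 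The main term uses $\log(1+\varepsilon)/\varepsilon\le1$. Summing gives $u_n\le\pi\bigl(n+\frac1{12}\sum\varepsilon_i^2+o(1)\bigr)$. The $o(1)$ (really $O(1)$ plus a vanishing term) comes from the boundary terms in Warschawski's theorem and from the corner at $a_1$ together with the left part of the domain.

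\textbf{Main obstacle.} I expect the hard part to be checking the hypotheses of Warschawski's theorem: the boundary is only piecewise linear, and the domain extends to the left of $0$. I would handle this by applying the theorem to $x\ge a_1$, where $\phi$ is Lipschitz with constant $<1/2$. The left end and the first piece would be treated separately with a comparison or monotonicity argument, using that $\Phi$ maps a fixed compact region to a bounded set.

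\textbf{Part (b).} This follows from (a) using $0\le\varepsilon_i<1$. For the upper bound, $\frac1{12}\sum\varepsilon_i^2\le n/12$, so $u_n\le\pi(13n/12+o(1))\le\frac{13}{6}\pi n$ for large $n$. For the lower bound, $\sum\varepsilon_i/2<n/2$ gives $u_n\ge\pi(n/2-4)$, and this is at least $\frac14\pi n$ once $n\ge16$. Taking $N\ge16$ large enough to absorb the $o(1)$ term completes the proof.
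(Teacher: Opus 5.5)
Your proposal follows the paper's proof essentially step for step. It uses the same piecewise evaluation of $\int dx/\theta$ (your exact value $\log(1+\varepsilon_k)/\varepsilon_k$ is what the paper computes in \eqref{0<epsilon<1}), Ahlfors' theorem for the lower bound, Warschawski's theorem with $\psi\equiv0$, $\theta'=-\varepsilon_k$ and $\int_{a_{k-1}}^{a_k}dx/\theta\le1$ for the upper bound, and $0\le\varepsilon_i<1$ to deduce (b). You are more careful than the paper about three things: the flat first piece, the hypothesis $\int dx/\theta>2$ in Ahlfors' theorem, and the fact that Warschawski's asymptotic estimate only gives an $O(1)$ error when started from the fixed point $0$ (which is all (b) needs, whereas the paper simply substitutes $x_1=a_0$). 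However, like the paper, you do not check that $S(a)$ is an L-strip of inclination $0$, as Theorem 2.3 requires; this amounts to $\varepsilon_n\to0$, which does not follow from \eqref{TheoremCondition}.
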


We will prove part (a) by using two distortion theorems, one due to Ahlfors \cite[p.97]{Nevanlinna} and the other due to Warschawski \cite[Theorem IV]{Warschawski}.

\begin{thm}[Ahlfors' distortion theorem]\label{AhlforsThm}

    \textit{Let $\Omega$ be a simply connected domain in $\mathbb{C}$ such that}

\begin{equation}
    \nonumber \theta(x_0) = \{ z \in \Omega : \Re(z) = x_0 \}, \ x_0 \in \mathbb{R},
\end{equation}

\noindent \textit{is either empty or a bounded line segment of length $\theta(x_0)$. Let $\varphi$ be a conformal mapping from $\Omega$ to the strip $S = \{ w = u + iv: |v| < \frac{\pi}{2} \}$ such that $z = x + iy \in \Omega \mapsto w = u + iv \in S$ and $\lim_{x \rightarrow +\infty} \varphi(x + iy) = +\infty$. If $z_1 = x_1 + iy_1$ and $z_2 = x_2 + i y_2$ map to $w_1 = u_1 + iv_1$ and $w_2 = u_2 + iv_2$ respectively, then,}

\begin{equation}\label{Ahlfors}
    \pi \int_{x_1}^{x_2} \frac{dx}{\theta(x)} \leq u_2 - u_1 + 4 \pi.
\end{equation}

\end{thm}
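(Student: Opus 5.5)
The plan is a length–area (extremal length) argument, comparing the region of $\Omega$ between two vertical crosscuts with its image in the strip $S$. For $x\in(x_1,x_2)$ the segment $\theta(x)$ is a crosscut of $\Omega$. Its image $\gamma_x=\varphi(\theta(x))$ is a crosscut of $S$ joining the two boundary lines $v=\pm\frac{\pi}{2}$. Put
\[
m(x)=\min_{\gamma_x}\Re w,\qquad M(x)=\max_{\gamma_x}\Re w .
\]
We may assume $z_1\in\theta(x_1)$ and $z_2\in\theta(x_2)$, so that $u_1\le M(x_1)$ and $u_2\ge m(x_2)$. The normalisation $\varphi(x+iy)\to+\infty$ as $x\to+\infty$ ensures that the $\gamma_x$ are nested, i.e. $\gamma_{x'}$ lies in the right-hand component of $S\setminus\gamma_x$ for $x'>x$.

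\textbf{Step 1 (lower bound in $\Omega$).} Let $\Gamma$ be the family of curves in $\Omega_{x_1,x_2}=\{z\in\Omega: x_1<\Re z<x_2\}$ joining $\theta(x_1)$ to $\theta(x_2)$. Take the admissible metric $\rho(z)=1/\theta(\Re z)$. Each curve in $\Gamma$ has $\rho$-length at least $\int_{x_1}^{x_2}dx/\theta(x)$, while the area is
\[
\iint\rho^2=\int_{x_1}^{x_2}\frac{\theta(x)}{\theta(x)^2}\,dx=\int_{x_1}^{x_2}\frac{dx}{\theta(x)}.
\]
Hence the extremal length satisfies $\lambda(\Gamma)\ge \int_{x_1}^{x_2}dx/\theta(x)$.

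\textbf{Step 2 (upper bound in $S$).} Extremal length is conformally invariant, so $\lambda(\Gamma)=\lambda(\varphi(\Gamma))$. The family $\varphi(\Gamma)$ consists of curves joining $\gamma_{x_1}$ to $\gamma_{x_2}$ in the region between them.

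Suppose $m(x_2)>M(x_1)$. Then every such curve crosses the rectangle $\{M(x_1)<u<m(x_2),\ |v|<\frac{\pi}{2}\}$ from left to right, because each crosscut separates $S$. By monotonicity of extremal length,
\[
\lambda(\varphi(\Gamma))\le \frac{m(x_2)-M(x_1)}{\pi}\le \frac{u_2-u_1}{\pi}.
\]
In this case the inequality holds even without the $4\pi$.

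\textbf{Step 3 (overlapping crosscuts; the main obstacle).} The difficulty is that the image crosscuts may wiggle, so that $m(x_2)\le M(x_1)$ can occur; the $4\pi$ is there to absorb this. To handle it, I would prove an auxiliary estimate: if two nested crosscuts of $S$ have overlapping real-part ranges, then the extremal distance between them in $S$ is bounded by an absolute constant, at most $2$.

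To prove this, I would take the metric $\rho=1$ on a rectangle of width about $\pi$ around the overlap, together with a comparison with a square of modulus $1$. Any curve joining the two crosscuts must then cross such a square either horizontally or vertically.

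\textbf{Step 4 (trimming).} Given the auxiliary estimate, choose $x_1'\ge x_1$ and $x_2'\le x_2$ at which $\int_{x_1}^{x_1'}dx/\theta=2$ and $\int_{x_2'}^{x_2}dx/\theta=2$. By Step 3, the crosscuts $\gamma_{x_1'}$ and $\gamma_{x_1}$ cannot overlap in real part, and likewise for $\gamma_{x_2'}$ and $\gamma_{x_2}$. This gives $m(x_1')>M(x_1)\ge u_1$ and $M(x_2')<m(x_2)\le u_2$.

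Apply Step 2 to the pair $x_1'<x_2'$ (if they cross, the estimate is trivial). This gives
\[
\pi\int_{x_1'}^{x_2'}\frac{dx}{\theta}\le u_2-u_1 .
\]
Adding back the two trimmed pieces, each worth $2\pi$, yields \eqref{Ahlfors}.

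I expect Step 3 to need the most care. If a sharp version is hard, I would instead cite the classical Ahlfors inequality as given in Nevanlinna, since the constant $4\pi$ there arises exactly from this end-trimming argument.
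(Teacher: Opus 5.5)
The paper does not prove this statement; it quotes it from Nevanlinna. The classical form there carries the hypothesis $\int_{x_1}^{x_2}dx/\theta(x)>2$. Your architecture (a length--area lower bound in $\Omega$, then trimming off two pieces of extremal distance $2$) is the standard extremal-length proof and gives the right constant. However, Steps 2 and 3 use monotonicity of extremal length backwards.

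If every curve of $\varphi(\Gamma)$ contains a horizontal crossing of $\{M(x_1)<u<m(x_2)\}$, then every metric assigns $\varphi(\Gamma)$ at least the length of those crossings. So what you actually get is $\lambda(\varphi(\Gamma))\ge (m(x_2)-M(x_1))/\pi$, which is useless next to Step 1. The upper bound you assert is false. Take $\Omega=e^{i\pi/6}S$ and $\varphi(z)=e^{-i\pi/6}z$; then $\theta\equiv 2\pi/\sqrt3$ and $M(x)-m(x)=\pi/\sqrt3$. For $x_2-x_1=\pi$ the image crosscuts are separated with $m(x_2)-M(x_1)=\pi/\sqrt3$, while $\pi\int_{x_1}^{x_2}dx/\theta=\sqrt3\,\pi/2$. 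In particular the inequality does not hold ``without the $4\pi$''.

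The valid comparison is the reverse one. Every horizontal crossing of the wider rectangle $\{m(x_1)<u<M(x_2)\}$ contains a curve of $\varphi(\Gamma)$, so $\lambda(\varphi(\Gamma))\le (M(x_2)-m(x_1))/\pi$. Your Step 4 supplies exactly what this version needs, namely $m(x_1')>M(x_1)\ge u_1$ and $M(x_2')<m(x_2)\le u_2$. This gives $\pi\int_{x_1'}^{x_2'}dx/\theta\le M(x_2')-m(x_1')<u_2-u_1$, so the architecture survives once Step 2 is corrected.

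Your sketch of Step 3 has the same defect: a metric on curves \emph{joining} the two crosscuts only bounds their extremal distance from below. The claim (overlap forces extremal distance $\le 2$) is true, but it must be proved via the conjugate family $\Gamma^*$, using $\lambda(\Gamma)\lambda(\Gamma^*)=1$. Here $\Gamma^*$ consists of curves in the region between the crosscuts joining the two edges of $S$. If $c$ lies in both real-part ranges, every curve of $\Gamma^*$ meets $\{u=c\}$; otherwise a point of the left crosscut with $u\ge c$, or of the right one with $u\le c$, would lie on the wrong side of it. Hence each such curve has length at least $\pi$ in $\{|u-c|<\pi\}\cap S$, and $\rho=1$ there gives $\lambda(\Gamma^*)\ge 1/2$. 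You should also trim by $2+\eta$, since Step 1 only gives $\ge 2$.

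Finally, the case $x_1'\ge x_2'$ is not trivial, because $u_2-u_1$ can be negative.
\begin{itemize}
\item If $2<\int_{x_1}^{x_2}dx/\theta\le 4$, one trimming suffices: $u_2\ge m(x_2)\ge m(x_1')>M(x_1)\ge u_1$.
\item If the integral is at most $2$, the inequality can genuinely fail. Take $\Omega=\{|y|<\epsilon/2\}\cup\{0<x<1,\ |y|<1\}$ and use the semicircular crosscuts $\{|z|=r,\ x>0\}$, $2\epsilon\le r\le 1/4$. Applying the overlap lemma at both ends and the corrected Step 2 in the middle gives $\Re\varphi(\epsilon+\tfrac{i}{2})-\Re\varphi(\epsilon)\ge\log\frac{1}{8\epsilon}-4\pi$. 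This exceeds $4\pi$ for small $\epsilon$, although both points lie on $\theta(\epsilon)$.
\end{itemize}
So the hypothesis $\int_{x_1}^{x_2}dx/\theta>2$ of the classical version is needed. It is harmless in Lemma 2.1: whenever it fails, the asserted lower bound there is negative while $u_n>0$.
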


\noindent Theorem 2.2 will allow us to estimate $u_n = \Phi(a_n)$ from below. In the other direction we use the following.

\begin{thm}[Warschawski's distortion theorem]\label{WarschawskiThm}
    \vspace{1mm}
\textit{Let $\phi_1(x)$, $\phi_2(x) \in \mathbb{C}^0(-\infty,\infty)$ with uniformly bounded derivatives almost everywhere and with $\phi_1(x) < \phi_2(x)$, for $-\infty < x < \infty$. Let}

\begin{equation}
    \nonumber \Omega = \{z \in \mathbb{C} : -\infty < x < \infty, \phi_1(x) < y < \phi_2(x) \},
\end{equation}
\begin{equation}
    \nonumber \theta(x) = \phi_1(x) - \phi_2(x) \quad \textnormal{and} \quad \psi(x) = \frac{1}{2}( \phi_1(x) + \phi_2(x)).
\end{equation}

\noindent \textit{Let $\varphi$ be a conformal mapping from $\Omega$ to the strip $S$ as in Theorem \ref{AhlforsThm}.}

\textit{We say that $\Omega$ is an L-strip with the boundary inclination $\gamma$ at $x = +\infty$, $|\gamma| < \pi/2$, if, for $x_2 > x_1$,}
\begin{equation}
    \nonumber \frac{\phi_1(x_2) - \phi_1(x_1)}{x_2 - x_1}, \quad \frac{\phi_2(x_2) - \phi_2(x_1)}{x_2 - x_1}
\end{equation}

\noindent \textit{approach the same limit, $\tan\gamma$, as $x_1$ and $x_2 \rightarrow +\infty$ simultaneously.}

\textit{If $\Omega$ is an L-strip with boundary inclination $\gamma = 0$ at $x = +\infty$, then}

\begin{equation}\label{Warschawski}
    u_2 - u_1 \leq \pi \int_{x_1}^{x_2} \frac{1+\psi'^2(x)}{\theta(x)}dx + \frac{\pi}{12} \int_{x_1}^{x_2} \frac{\theta'(x)^2}{\theta(x)} dx + o(1),
\end{equation}

\noindent \textit{as $u_1, u_2 \rightarrow +\infty$, uniformly with respect to $v_1$ and $v_2$.}

\end{thm}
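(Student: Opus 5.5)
The statement is Warschawski's classical distortion theorem, which the paper cites rather than proves. The approach I would take is the extremal-length (Dirichlet principle) argument. For $x_1<x_2$, let $Q=Q(x_1,x_2)$ be the quadrilateral cut from $\Omega$ by the vertical crosscuts $\{\Re z=x_1\}$ and $\{\Re z=x_2\}$. Its "sides" are these two crosscuts, and its "ends" are the boundary arcs $y=\phi_1(x)$ and $y=\phi_2(x)$. Let $\lambda(Q)$ denote the extremal distance in $Q$ between the two crosscuts. Then $1/\lambda(Q)$ is the extremal distance between the two boundary arcs, and by the Dirichlet principle
\[
\frac{1}{\lambda(Q)} \le \iint_Q |\nabla v|^2\,dx\,dy
\]
for any admissible $v$ equal to $0$ on the lower arc and $1$ on the upper arc. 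Throughout I read $\theta$ as the positive width $\phi_2-\phi_1$; the displayed sign in the statement appears to be a slip.

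\textbf{Step 1 (the test function).} I would take $v=(y-\phi_1(x))/\theta(x)$, which is Lipschitz since $\phi_1,\phi_2$ have bounded derivatives. On the vertical segment at $x$, substitute $y=\phi_1+t\theta$ with $t\in[0,1]$. Then $v_y=1/\theta$ and $v_x=-(\phi_1'+t\theta')/\theta$. Integrating over $y$ (that is, $dy=\theta\,dt$) gives
\[
\int |\nabla v|^2\,dy=\frac{1}{\theta}\Big(1+\int_0^1(\phi_1'+t\theta')^2dt\Big)=\frac{1}{\theta}\Big(1+\phi_1'^2+\phi_1'\theta'+\tfrac13\theta'^2\Big).
\]
Since $\psi'=\phi_1'+\theta'/2$, this equals $\big(1+\psi'^2+\tfrac1{12}\theta'^2\big)/\theta$. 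Integrating in $x$ then yields
\[
\frac{1}{\lambda(Q)}\le \int_{x_1}^{x_2}\frac{1+\psi'^2}{\theta}\,dx+\frac1{12}\int_{x_1}^{x_2}\frac{\theta'^2}{\theta}\,dx,
\]
which is exactly the main term in \eqref{Warschawski} divided by $\pi$.

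\textbf{Step 2 (transfer to the strip).} Extremal distance is conformally invariant. Hence $1/\lambda(Q)$ equals the extremal distance in $S$ between the image crosscuts $\varphi(\{\Re z=x_1\})$ and $\varphi(\{\Re z=x_2\})$. If those image curves were exactly the vertical segments $\Re w=u_1$ and $\Re w=u_2$, this distance would be $(u_2-u_1)/\pi$. That would give \eqref{Warschawski} with no error term.

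\textbf{Step 3 (the main obstacle).} In general the image crosscuts are not vertical, so I must show their real-part oscillation tends to $0$ as $x\to+\infty$. This is where the hypothesis that $\Omega$ is an L-strip with inclination $\gamma=0$ is used. First, rescale $\Omega$ near $x$ by the local width $\theta(x)$. The inclination condition makes these rescaled pieces converge locally to a straight horizontal strip. Second, use a normal-families or Carathéodory-kernel argument, together with Ahlfors' theorem (Theorem \ref{AhlforsThm}) for a priori control, to conclude that $\varphi$ is locally close to an affine map in the rescaled picture. It then follows that $\varphi(\{\Re z=x\})$ lies in a vertical band of width $o(1)$.

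Given this, monotonicity of extremal distance gives the following. The extremal distance between $\varphi(\{\Re z=x_1\})$ and $\varphi(\{\Re z=x_2\})$ is at least $(u_2-u_1)/\pi-o(1)$, since the image region contains a rectangle of length $u_2-u_1-o(1)$. The $o(1)$ is uniform in $v_1,v_2$ because the whole image crosscut is controlled. Combining this with Step 1 gives \eqref{Warschawski}.

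I expect the uniform near-verticality of the image crosscuts in Step 3 to be the only genuinely delicate part of the argument.
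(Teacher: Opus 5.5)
The paper does not prove this theorem. It quotes Warschawski's Theorem IV and only asserts that his argument survives for a.e.\ differentiable $\phi_1,\phi_2$. Your Steps 1--2 are a sound route to the main term; the $\frac{1}{12}$ is exactly $\int_0^1(t-\frac12)^2\,dt$. Because your test function $v=(y-\phi_1)/\theta$ need only be Lipschitz, these steps also justify the a.e.\ extension directly.

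There is one bookkeeping slip: you have inverted $\lambda(Q)$ twice. A function equal to $0$ and $1$ on the two arcs bounds the reciprocal of the extremal distance between the arcs. So the Dirichlet principle gives $\lambda(Q)\le\iint_Q|\nabla v|^2$, and conformal invariance identifies $\lambda(Q)$, not $1/\lambda(Q)$, with the extremal distance between the image crosscuts. The two errors cancel, so simply write $\lambda(Q)$ throughout.

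The genuine gap is Step 3, and it carries the whole content of the error term. Let $x_2\downarrow x_1$ in \eqref{Warschawski}: uniformity in $v_1,v_2$ forces $\max_{\sigma_x}\Re\varphi-\min_{\sigma_x}\Re\varphi\to 0$ for the crosscuts $\sigma_x=\Omega\cap\{\Re z=x\}$. So this near-verticality is precisely what must be proved, and your sketch does not prove it.

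Normal families or the Carath\'eodory kernel theorem give convergence of the rescaled maps to $\zeta\mapsto\pi\zeta+c$ only locally uniformly in the open limit strip. That controls $\Re\varphi$ on the part of $\sigma_x$ at distance at least $\delta\theta(x)$ from $\partial\Omega$. It says nothing about the two end pieces of $\sigma_x$, which are exactly the points with $v_j$ near $\pm\pi/2$. So ``it then follows'' conceals a missing boundary estimate.

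A standard way to supply it is a length--area (Courant--Lebesgue/Wolff) argument at each endpoint $b$ of $\sigma_x$:
\begin{itemize}
\item Among the arcs $\Omega\cap\{|z-b|=r\}$ with $2\delta\theta(x)<r<\sqrt{\delta}\,\theta(x)$, at least one has image of length $O\big((\log(1/\delta))^{-1/2}\big)$.
\item Since $\partial\Omega$ near $b$ is a single almost-flat graph, that image arc together with a segment of $\partial S$ encloses the image of the end piece of $\sigma_x$, bounding its diameter. Then let $\delta\to0$.
\end{itemize}

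The constant in the length bound depends on the area of the image of a window around $\sigma_x$. Bounding that area needs an upper bound on $\Re\varphi$ across windows, and so do the normal-families step and the fact that the limit map sends ends to ends. Ahlfors' lower bound alone does not give this. It comes from your Step 1--2 estimate $\min_{\sigma_{x_2}}\Re\varphi-\max_{\sigma_{x_1}}\Re\varphi\le\pi\lambda(Q)$ combined with Theorem~\ref{AhlforsThm}.
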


\noindent We note that in our case the domain $S(a)$ is symmetric with respect to the real axis so we have $\psi(x) \equiv 0$. We also note that \cite[Theorem. IV]{Warschawski} requires the derivatives of $\phi_1$ and $\phi_2$ to exist everywhere, but it is clear from the discussion in \cite[p.290]{Warschawski} that the proof is valid when the functions $\phi_1$ and $\phi_2$ are differentiable almost everywhere with uniformly bounded derivative.

\begin{proof}[Proof of Lemma 2.1]

To apply Theorems 2.2 and 2.3 we first need to estimate,

\begin{equation}
    \int_{a_0}^{a_n} \frac{dx}{\theta (x)} = \sum_{i=0}^{n-1} \int _{a_i}^{a_{i+1}} \frac{dx}{\theta_i(x)},
\end{equation}

\noindent where $\theta(x) = 2 \phi(x)$ and $\theta_n(x) = 2\phi_n(x)$ for $a_{n-1} \leq x \leq a_n$. We define $\tilde{\phi}_n(x) = (a_n-a_{n-1})/2$, $a_{n-1} \leq x \leq a_n$, and we take $\tilde{\phi}(x) = \tilde{\phi}_n(x)$, $a_{n-1} \leq x \leq a_n$, $n \geq 1$. If $\varepsilon_n = 0$, then, by \eqref{LineEquations},

\begin{equation}
    \nonumber \int_{a_{n-1}}^{a_n} \frac{dx}{\theta_n(x)} = 1.
\end{equation}

\noindent If $0 < \varepsilon_n < 1$, then we first note that
\begin{equation}\label{Boxes}
    \int_{a_{n-1}}^{a_n} \frac{dx}{\tilde{\theta}(x)} = \int_{a_{n-1}}^{a_n} \frac{dx}{a_n - a_{n-1}} = 1, \quad \textnormal{where} \quad \tilde{\theta}(x) = 2\tilde{\phi}(x),
\end{equation}

\noindent and estimate the following difference, where $\tilde{\theta}_n(x) = 2\tilde{\phi}_n(x)$, $a_{n-1} \leq x \leq a_n$:
\begin{equation}\label{0<epsilon<1}
    \begin{split}
        \int_{a_{n-1}}^{a_n} \left(\frac{1}{\tilde{\theta}_n(x)} - \frac{1}{\theta_n(x)} \right) dx & = 1 - \int_{a_{n-1}}^{a_n} \frac{1}{2( \frac{-\varepsilon_n}{2} (x-a_n) + \frac{d_n}{2})}  dx \\
        & = 1 - \left[ \frac{-1}{\varepsilon_n} \log(-\varepsilon_n(x-a_n) + d_n )\right]_{a_{n-1}}^{a_n} \\
        & = 1 + \frac{1}{\varepsilon_n} \left[ \log d_n - \log(-\varepsilon_n(a_{n-1} -a_n ) + d_n) \right] \\
        & = 1 + \frac{1}{\varepsilon_n} \left[ \log d_n - \log((-\varepsilon_n)(-d_n) + d_n) \right] \\
        & = 1 - \frac{1}{\varepsilon_n} \log (1+\varepsilon_n) \\
        & = 1 - \frac{1}{\varepsilon_n} (\varepsilon_n - \frac{1}{2} \varepsilon_n^2 +\frac{1}{3} \varepsilon_n^3 - ...) \\
        & = \frac{1}{2} \varepsilon_n - \frac{1}{3} \varepsilon_n^2 + ... \\
        & < \frac{1}{2} \varepsilon_n, \quad 0 < \varepsilon_n < 1.
    \end{split}
\end{equation}

\newpage

\noindent We have

\begin{equation}
    \nonumber \int_{a_0}^{a_n} \left( \frac{1}{\tilde{\theta}(x)} - \frac{1}{\theta(x)} \right) dx = \sum_{i = 0}^{n-1} \int_{a_i}^{a_{i+1}} \left( \frac{1}{\tilde{\theta}_i(x)} - \frac{1}{\theta_i(x)} \right) dx,
\end{equation}

\noindent so, by \eqref{Boxes} and \eqref{0<epsilon<1},

\begin{equation}\label{n - epsilon}
    n - \int_{a_0}^{a_n} \frac{dx}{\theta(x)} \leq \sum_{i = 1}^{n} \frac{\varepsilon_i}{2}, \quad \textnormal{that is,} \quad \int_{a_0}^{a_n} \frac{dx}{\theta(x)} \geq n - \sum_{i=1}^n \frac{\varepsilon_i}{2}.
\end{equation}

\noindent We can then feed this into Theorem 2.2 to give the required lower bound for $u_n - u_0 = \Phi(a_n) - \Phi(a_0)$. We recall here that $a_0 = 0$ and so $\Phi(a_0) = 0$. 

Now, we find an upper bound for $\int_{a_0}^{a_n} \frac{\theta'(x)^2}{\theta(x)} dx$ in order to apply Theorem~2.3 to give us an upper bound for $u_n - u_0$. We have

\begin{equation}\label{epsilonsquared}
    \int_{a_{n-1}}^{a_n} \frac{\theta_n'(x)^2}{\theta_n(x)} dx = \int_{a_{n-1}}^{a_n} \frac{\varepsilon_n^2}{\theta_n(x)} dx \leq \varepsilon_n^2,
\end{equation}

\noindent since

\begin{equation}
    \nonumber \int_{a_{n-1}}^{a_n} \frac{dx}{\theta_n(x)} \leq \int_{a_{n-1}}^{a_n} \frac{dx}{\tilde{\theta}_n(x)} \leq 1.
\end{equation}

\noindent This proves part (a).

To deduce part (b) from part (a), we use the fact that $0 \leq \varepsilon_n < 1$. First
\begin{equation}\label{Upper}
    \begin{split}
        u_n  & \leq \pi \left( n + \frac{1}{12} \sum_{i=1}^n\varepsilon_i^2 + o(1) \right) \\
        & \leq \pi \left( n + \frac{1}{12}n + o(1 ) \right) \\
        & \leq \pi \left( \frac{13n}{12} + 1 \right), \ \textnormal{for } n \geq N \\
        & \leq \frac{13 \pi n}{6}, \ \textnormal{for} \ n \geq N.
    \end{split}
\end{equation}

\noindent Then,
\begin{equation}\label{Lower}
    \begin{split}
        u_n & \geq \pi \left( n - \sum_{i=1}^n \frac{\varepsilon_i}{2} - 4 \right) \\
        & \geq \pi \left(n - \frac{n}{2} - 4 \right) \\
        & = \pi \left( \frac{n}{2} - 4 \right) \\
        & \geq \frac{\pi n}{4}, \ \textnormal{for} \ n \geq 16.
    \end{split}
\end{equation}

\noindent This completes the proof of Lemma 2.1.
\end{proof}

\subsection{Dynamics of the model function}

Recall that the model function $g$ is a self map of $S(a)$ defined in terms of a self map of $\mathbb{H} = \{ x + iy : x > 0 \}$ given by
\begin{equation}
    \nonumber h(z) = \lambda z + 1, \quad \textnormal{where} \ \lambda > 1.
\end{equation}

This map $h$ is conjugated to the self map $\tilde{h}$ of $S$ given by
\begin{equation}
    \nonumber \tilde{h}(z) = \textnormal{Log}(h(\exp(z))), \ z \in S,
\end{equation}

\noindent and then to the self map $g$ of $S(a)$ given by $g = \Phi^{-1} ( \tilde{h} ( \Phi))$.

We show that, for sufficiently large $n$, $g^n(0)$ is comparable to $a_n$.

\begin{lem}
    \textit{With $(a_n)$ and $g$ defined as above, we have}
    \begin{equation}
        \nonumber \frac{1}{5} a_n \leq g^n(0) \leq 5 a_n, \quad \textnormal{for} \ n \geq N \geq 16.
    \end{equation}
\end{lem}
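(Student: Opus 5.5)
The plan is to compute the orbit of $0$ explicitly in the half-plane model, transfer it to the strip $S$, and then use Lemma 2.1(b) to locate $g^n(0)$ between two consecutive points $a_m, a_{m+1}$ with $m$ comparable to $n$. The concavity of $(a_n)$ coming from \eqref{TheoremCondition} then compares $a_m$ with $a_n$.

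\emph{Step 1 (explicit orbit).} Since $\Phi(0)=0$ and $\Phi$ maps the real axis to itself, the orbit stays real. We have $\exp(0)=1$, and by induction
\[
h^n(1)=\lambda^n+\frac{\lambda^n-1}{\lambda-1}=\lambda^n\frac{\lambda}{\lambda-1}-\frac{1}{\lambda-1}.
\]
Hence $w_n:=\tilde h^n(0)=\textnormal{Log}(h^n(1))$ satisfies
\[
w_n=n\log\lambda+\log\frac{\lambda}{\lambda-1}+o(1)\qquad (n\to\infty).
\]
In particular $w_n$ is real, increasing, and lies within a bounded distance of $n\log\lambda$. We also have $g^n(0)=\Phi^{-1}(w_n)$, which is real.

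\emph{Step 2 (locating $g^n(0)$).} By symmetry, $\Phi$ restricted to $[0,\infty)$ is an increasing bijection onto $[0,\infty)$. So if $u_m\le w_n\le u_{m+1}$, then $a_m\le g^n(0)\le a_{m+1}$. Lemma 2.1(b) gives $\tfrac14\pi m\le u_m\le \tfrac{13}{6}\pi m$ for $m\ge N$. Therefore
\[
\frac{6\,w_n}{13\pi}-1\le m\le \frac{4w_n}{\pi}.
\]
At this point I fix $\lambda$, say $\lambda=e^{\pi}$, or any $\lambda$ with $\tfrac{13\pi}{30}<\log\lambda<\pi$, so that for large $n$:
\begin{itemize}
\item on the lower side, $m\ge n/5$ (in fact $m\ge \tfrac{6}{13}n-O(1)$);
\item on the upper side, $m+1\le 4n+O(1)\le 5n$.
\end{itemize}
The only care needed is to take $n$ large enough that the relevant $m$ exceed the $N$ of Lemma 2.1(b) and absorb the $o(1)$ and additive constants. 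This is why the statement holds only for $n\ge N$, with $N$ possibly enlarged.

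\emph{Step 3 (regularity of $a_n$).} Condition \eqref{TheoremCondition} with $n_0=2$ says $d_{n-1}\ge d_n$, so the increments $d_n$ are nonincreasing. Together with $a_0=0$ this gives two consequences:
\begin{itemize}
\item $a_{kn}\le k\,a_n$ for integers $k\ge1$, and more generally $a_j\le \lceil j/n\rceil a_n$;
\item $a_j\ge (j/n)\,a_n$ for $j\le n$, since the average increment over $[0,j]$ is at least that over $[0,n]$.
\end{itemize}
Hence $g^n(0)\le a_{m+1}\le a_{5n}\le 5a_n$, and $g^n(0)\ge a_m\ge \tfrac{m}{n}a_n\ge \tfrac15 a_n$, using $a_m\ge a_n$ directly if $m\ge n$.

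\emph{Main obstacle.} I expect the main difficulty to be the bookkeeping in Step 2. One has to choose $\lambda$ so that both constants land inside the factor $5$, given the fairly crude constants $\tfrac14$ and $\tfrac{13}{6}$ in Lemma 2.1(b). One also has to ensure that the index $m$ straddling $w_n$ is itself at least $N$, so that Lemma 2.1(b) applies. If $\log\lambda=\pi$ makes the lower-side margin too tight, I would instead take $\log\lambda$ slightly larger than $13\pi/30$ and recheck the upper bound, which still has slack since $4\log\lambda/\pi<5$.
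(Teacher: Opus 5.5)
Your proposal is correct and follows essentially the paper's proof: the explicit orbit $\tilde h^n(0)=\pi n+\delta_n$ with $\lambda=e^\pi$, bracketing $g^n(0)$ between points $a_m$ via Lemma 2.1(b), and concavity of $(a_n)$ giving $a_{kn}\le k a_n$. Your lower bound via $a_m\ge (m/n)a_n$ is an equivalent form of the paper's $a_{\lfloor 6n/13\rfloor}\ge\frac15 a_{5\lfloor 6n/13\rfloor}\ge\frac15 a_n$, and you are more careful than the paper in enlarging $N$ so that the bracketing indices themselves are in the range where Lemma 2.1(b) applies. Your closing contingency is backwards, though it is never needed: with $\log\lambda=\pi$ the lower side already has ample slack ($6/13>1/5$), whereas moving $\log\lambda$ down toward $13\pi/30$ would tighten that side rather than relax it.
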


\begin{proof}

    First we estimate the behaviour of $\tilde{h}^n(0)$. For $n \geq 0$,
\begin{equation}
\nonumber 
    \begin{split}
        \tilde{h}^n(0) = \log(h^n(e^0)) = \log(h^n(1)) & = \log \left( \sum_{k=0}^{n} \lambda^k \right),\\
        & = \log(\lambda^n) + \log \left( 1 + \sum_{k = 1}^{n} \frac{1}{\lambda^k} \right), \\
    \end{split}
\end{equation}

\noindent and if we choose $\lambda = e^\pi$, then $0< \sum_{k=1}^n \frac{1}{\lambda^k} < 1$ and $0 < \log \left( 1 + \sum_{k = 1}^{n} \frac{1}{\lambda^k} \right) < 1$. So we have that there exists $\delta_n$ such that $0 < \delta_n < 1$ and

\begin{equation}\label{Mapping0inModel}
    \tilde{h}^n(0) = \log(\lambda^n) + \log \left( 1 + \sum_{k = 1}^{n} \frac{1}{\lambda^k} \right) = n \log(e^\pi) + \delta_n = \pi n + \delta_n.
\end{equation}

\noindent Now we would like to find a relationship between $a_n$ and $a_{Cn}$ with $C \in \mathbb{N}$. By the definition of the $d_n$ we have the following,

\begin{equation}
    \nonumber a_n = \sum_{i=1}^n d_i, \quad n \geq 1.
\end{equation}

\noindent So for any $C \in \mathbb{N}$ we have that,

\begin{equation}
    \nonumber a_{Cn} = \sum_{i=1}^{Cn} d_i.
\end{equation}

\noindent Since we have that $d_n \leq d_{n-1}$, we also have the following:

\begin{equation}
    \nonumber \sum_{i = m}^{n+m-1} d_i \leq \sum_{i=1}^n d_i = a_n, \quad \textnormal{for all } m,n \geq 1.
\end{equation}

\noindent So we conclude that

\begin{equation}\label{MultipleSubscripts}
    a_{Cn} = \sum_{i=1}^{Cn} d_i \leq Ca_n.
\end{equation}

From Lemma 2.1 (b), we have that $u_n \in [ \pi n/4 , 13 \pi n/6] = I_n$, say, for $n \geq N$, where $N \geq 16$. Since we have that $\tilde{h}^n(0) = \pi n + \delta_n$, $0 < \delta_n < 1$ by \eqref{Mapping0inModel}, we want to find which intervals $I_m$ contain $\tilde{h}^n(0)$, expressed in terms of $m$.

To do this we observe that $\tilde{h}^n(0) \not\in I_m$ if and only if

\begin{equation}
    \nonumber
        \textnormal{either } \quad \pi n + \delta_n < \frac{\pi m}{4}, \quad \textnormal{or} \quad \pi n + \delta_n > \frac{13 \pi m}{6}.
\end{equation}

\noindent That is,

\begin{equation}
\nonumber 
        \textnormal{either } \quad m > \frac{\pi n + \delta_n}{ \frac{\pi}{4}}, \quad \textnormal{or} \quad m < \frac{\pi n + \delta_n}{ \frac{13 \pi }{6} }.
\end{equation}

\noindent Therefore, for $n \geq N$, $g^n(0)$ must lie in $[a_{p(n)}, a_{q(n)}]$, where

\begin{equation}
    \nonumber p(n) = \left\lfloor \frac{\pi n + \delta_n}{\frac{13 \pi }{6}} \right\rfloor, \quad q(n) = \left\lfloor \frac{\pi n + \delta_n}{\frac{\pi}{4}} \right\rfloor + 1.
\end{equation}

\noindent Since, by \eqref{Mapping0inModel}, we have that $0 < \delta_n < 1$,

\begin{equation}
    \nonumber
        p(n) = \left\lfloor \frac{\pi n + \delta_n}{\frac{13 \pi }{6}} \right\rfloor = \left\lfloor \frac{6n + 6\delta_n/\pi}{13} \right\rfloor \geq \left\lfloor \frac{6 n}{13} \right\rfloor.
\end{equation}

\noindent So, by \eqref{MultipleSubscripts}, we have that

\begin{equation}
    a_{p(n)} \geq a_{ \left\lfloor 6n/13 \right\rfloor} \geq \frac{1}{5}a_{5 \left\lfloor 6n/13 \right\rfloor} \geq \frac{1}{5}a_n, \ \textnormal{for} \ n \geq 3.
\end{equation}

\noindent We also have that

\begin{equation}
    \nonumber
    \begin{split}
        q(n) & = \left\lfloor \frac{\pi n + \delta_n}{\frac{\pi}{4}} \right\rfloor + 1 \leq \left\lfloor \frac{\pi n + 1}{\frac{\pi}{4}} \right\rfloor + 1 \\
        & = \left\lfloor 4n + \frac{4}{\pi} \right\rfloor + 1 = 4n + 2 \leq 5n, \ \textnormal{for} \ n \geq 2,
    \end{split}
\end{equation}

\noindent and so,

\begin{equation}
    \nonumber a_{q(n)} \leq a_{5n} \leq 5a_n, \ \textnormal{for } n \geq 2.
\end{equation}

\noindent Hence, we can conclude that $\frac{a_n}{5} \leq |g^n(0)| \leq 5 a_n$, for $n \geq N$ by \eqref{Lemma2.1(b)}, completing the proof of Lemma 2.4.
\end{proof}

\section{Approximating the model function}

\noindent The final key idea is to apply an approximation theorem to our model function~$g$ to obtain an entire function $f$ satisfying Theorem 1.1. We will do this in this section.

Here we state a special case of a result of Rempe and Rippon \cite[Lemma 4.2]{Lasse} in a form that is applicable to our problem.

First, recall that a Weierstrass set is defined as follows \cite[p.142]{AApprox}.

\begin{defn}\label{Def}
    Supposed a set $F$ is closed in $\mathbb{C}$. Then $F$ is called a \textit{Weierstrass set} in $\mathbb{C}$ if any continuous $f : F \rightarrow \mathbb{C}$ that is holomorphic on the interior of $F$ can be uniformly approximated by a holomorphic function $g: \mathbb{C} \rightarrow \mathbb{C}$, up to an arbitrarily smaller error $\varepsilon$. 
\end{defn}

\noindent We have the following sufficient condition for a set to be a Weierstrass set \cite[p.142]{AApprox}.

\begin{thm}\label{WeierstrassSet}
    \vspace{1mm}
    \textit{A closed set $F$ in $\mathbb{C}$ is a Weierstrass set if and only if}

    \begin{equation}
        \nonumber \hat{\mathbb{C}} \backslash F \ \textnormal{\textit{is connected}} \quad \textnormal{and} \quad \hat{\mathbb{C}} \backslash F \ \textnormal{\textit{is locally connected at }} \infty,
    \end{equation}    

    \noindent \textit{where $\hat{\mathbb{C}} = \mathbb{C} \cup \{\infty\}$ is the one-point compactification of $\mathbb{C}$.}
\end{thm}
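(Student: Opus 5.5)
The statement is Arakelian's characterisation of Weierstrass (uniform approximation) sets. It is quoted from \cite{AApprox}, so in the paper it could simply be cited. If I were to prove it, I would treat the two directions separately, using Mergelyan's theorem as the compact-set ingredient.

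\emph{Necessity of connectedness.} Suppose $\hat{\mathbb{C}}\backslash F$ has a component $V$ not containing $\infty$. Then $V$ is bounded and open, and $\partial V\subset F$. Fix $a\in V$ and take $f(z)=1/(z-a)$ on $F$, which is continuous on $F$ and holomorphic on its interior. Suppose an entire $g$ satisfies $|g-f|<\varepsilon$ on $F$. Then $|(z-a)g(z)-1|<\varepsilon\,\textnormal{diam}(V\cup\{a\})$ on $\partial V$. By the maximum principle the same bound holds at $z=a$, where the left-hand side equals $1$. This is a contradiction for small $\varepsilon$.

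\emph{Necessity of local connectedness at $\infty$.} If local connectedness fails, there is $R>0$ such that points of $\Omega=\hat{\mathbb{C}}\backslash F$ arbitrarily close to $\infty$ cannot be joined to $\infty$ inside $\Omega\cap\{|z|>R\}$. This yields a sequence of "pockets" $V_k\subset\Omega\cap\{|z|>R\}$ escaping to $\infty$, each separated from $\infty$ within $\{|z|>R\}$ by a compact piece $K_k\subset F$, with $K_k\cup\overline{D(0,R)}$ enclosing $V_k$. On $F$ I would build a continuous function, holomorphic on $F^\circ$, with two properties:
\begin{itemize}
\item it is bounded by $1$ on $F\cap\{|z|\leq R\}$;
\item it is $\approx 1/(z-b_k)$, scaled by a factor $M_k$, on $K_k$, where $b_k\in V_k$.
\end{itemize}
Then a maximum-principle argument on the bounded domain enclosed by $K_k$ and $\overline{D(0,R)}$, as in the first step, forces any approximating entire function to be large on $D(0,R)$. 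This contradicts approximation to within $\varepsilon$ there. Arranging the pieces $K_k$ disjointly and the function continuously is the fiddly part of this direction.

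\emph{Sufficiency.} Here I would follow Arakelian's inductive scheme. Local connectedness at $\infty$ gives, for each $R$, some $R'>R$ such that every point of $\Omega\backslash D(0,R')$ connects to $\infty$ in $\Omega\backslash \overline{D(0,R)}$. Choose radii $R_1<R_2<\dots$ accordingly and compact sets
\[
E_k=(F\cap\overline{D(0,R_k)})\cup\overline{D(0,R_{k-2})}.
\]
Connectedness of $\Omega$ together with the choice of radii makes $\mathbb{C}\backslash E_k$ connected. I then construct entire functions $g_k$ by induction. Each $g_{k+1}$ approximates, via Mergelyan's theorem on $E_{k+1}$, the function equal to $f$ on $F\cap\overline{D(0,R_{k+1})}$ and to $g_k$ on $\overline{D(0,R_{k-1})}$, with error $\varepsilon 2^{-k}$. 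To make the function being approximated continuous across the two pieces of $E_{k+1}$, I first patch $f$ and $g_k$ with a partition of unity on the overlap region. This introduces a $\bar\partial$-error, which must be removed by a Cauchy-integral correction supported in $\Omega$ and then swept to $\infty$ along the connecting paths (pole pushing). The limit $g=\lim g_k$ is entire and satisfies $|g-f|<\varepsilon$ on $F$.

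The main obstacle is this patching step. Controlling the correction uniformly, with errors summable over $k$, is exactly where local connectedness at $\infty$ is used, and I would model it on Arakelian's original estimates.
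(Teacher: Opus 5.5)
The paper gives no proof of this statement. It quotes it from Arakelian \cite{AApprox}, so your opening remark is exactly the paper's route. Your Runge-type argument for the connectedness of $\hat{\mathbb{C}}\setminus F$ is also correct. The self-contained sketch of the rest, however, has genuine gaps.

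\textbf{Necessity of local connectedness at $\infty$.} The maximum principle gives nothing here. For $\psi_k=(z-b_k)g-M_k$ you know $|\psi_k(b_k)|=M_k$ and that $\psi_k$ is small on $K_k$. On the part of $\partial W_k$ lying in $\overline{D(0,R)}$, however, the only bound is $|\psi_k|\le (R+|b_k|)\max_{\overline{D(0,R)}}|g|+M_k$. This is already at least $M_k$, even if $g$ vanished there, so no lower bound on $g$ follows. Also $D(0,R)\not\subset F$, so there is no approximation ``there'' to contradict. The contradiction must instead be that one fixed $g$ is bounded on $\overline{D(0,R)}$ while the pockets force lower bounds tending to $\infty$. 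That requires a single $f$ serving all $k$ at once. When $F^\circ$ links the pieces $K_k$, you cannot prescribe $f$ on them independently, so this is more than a fiddly step.

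What works is a harmonic-measure (two-constants) estimate. Take $f=\sum_k c_k/(z-b_k)$ with $b_k$ in the pocket $V_k$ of $F\cup\overline{D(0,R)}$, and set $\psi_k=(z-b_k)(g-f)$. Subharmonicity of $\log|\psi_k|$ gives
\[
\log|c_k|\le I_k+(1-\omega_k)\log\varepsilon+\omega_k\log\Bigl(\max_{\overline{D(0,R)}}|g|+\max_{\overline{D(0,R)}}|f|\Bigr),
\]
where $\omega_k$ is the harmonic measure of $\partial V_k\cap\partial D(0,R)$ at $b_k$ and $I_k=\int_{\partial V_k}\log|\zeta-b_k|\,d\omega_{b_k}(\zeta)$. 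Take $b_k$ very close to a far-out point of the outer boundary of $V_k$. Regularity there, and Koebe's theorem after enlarging $V_k$ to a simply connected domain, then give $\omega_k\le 1/k$ and $\sum_k e^{I_k}/|b_k|<\infty$. With $c_k=2\varepsilon_0e^{I_k}$ this forces $\max_{\overline{D(0,R)}}|g|\ge\varepsilon 2^k-O(1)$ for every $k$, whenever $\varepsilon\le\varepsilon_0$.

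\textbf{Sufficiency.} Your claim that $\mathbb{C}\setminus E_k$ is connected is false. Any bounded component $P$ of $\mathbb{C}\setminus(F\cup\overline{D(0,R_{k-2})})$ lies in $\overline{D(0,R_{k-1})}$ by your choice of radii. Hence $\partial P\subset E_k$, and $P$ is a bounded component of $\mathbb{C}\setminus E_k$. Such holes occur whenever $F$ contains a U-shaped arc with both ends in $D(0,R_{k-2})$ and its bottom outside $\overline{D(0,R_{k-2})}$. A set with $\hat{\mathbb{C}}\setminus F$ connected and locally connected at $\infty$ can contain such arcs at every scale, so no choice of radii avoids this.

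You must adjoin these holes to $E_k$, taking $g_k$ as the target on them. Their union is bounded precisely because $\hat{\mathbb{C}}\setminus F$ is locally connected at $\infty$, and this is where that hypothesis is genuinely needed. Once this is done, the patching is not the obstacle you describe. Take a cut-off $\chi$ equal to $1$ near $\overline{D(0,R_{k-1})}$ together with its holes. The Cauchy transform $v$ of $\bar\partial\chi\,(g_k-f)\mathbf{1}_F$ is continuous and uniformly small. The function equal to $\chi g_k+(1-\chi)f-v$ on $F\cap\overline{D(0,R_{k+1})}$, and to $g_k-v$ on the filled disc, is continuous on $E_{k+1}$ and holomorphic in its interior. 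So Mergelyan's theorem applies directly, with no pole pushing.

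As it stands, your sketch leaves the necessity of local connectedness at $\infty$ unproved, and bases sufficiency on a false connectivity claim. Citing \cite{AApprox}, as the paper does, is the sound route here.
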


\noindent Now we state the approximation theorem that we use to construct our example.

\begin{lem}\label{Approximation}
    \vspace{1mm}
    \textit{Let $V \subset \mathbb{C}$ be a simply connected domain such that $\overline{V}$ is a Weierstrass set. Let $\varphi : \mathbb{H} \rightarrow V$ be a Riemann map, and set $V' := \varphi( \{ \zeta \in \mathbb{H} : \Re(\zeta) \geq 1 \})$. Now let $g : \bar{V} \rightarrow \mathbb{C}$ be continuous on $\overline{V}$ and holomorphic on $V$. Suppose furthermore that $g(\overline{V}) \subset V'$. Then for every $\varepsilon > 0$, there exists an entire function $f$ such that,}

\begin{equation}\label{ApproximationResultLasse}
    \textnormal{for all } z\in V, \quad f(z) \in V \ \textnormal{and} \ |\varphi^{-1}(f(z)) - \varphi^{-1}(g(z)) | \leq \varepsilon.
\end{equation}
    
\end{lem}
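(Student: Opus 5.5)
The plan is to reduce the statement to a \emph{tangential} approximation problem on the closed set $\overline{V}$, in which the allowed error at each point is comparable to the Euclidean size of a hyperbolic $\varepsilon$-ball around $g(z)$. The hypothesis $g(\overline{V}) \subset V'$ is exactly what makes that size positive and controllable. Assume without loss of generality that $\varepsilon < 1/4$.

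\emph{Step 1: a local distortion estimate.} Fix $w \in V'$ and set $\zeta = \varphi^{-1}(w)$, so that $\Re(\zeta) \geq 1$ and the disc $D(\zeta, 1/2)$ lies in $\mathbb{H}$. Applying the Koebe one-quarter theorem and the Koebe distortion theorem to $\varphi$ on $D(\zeta,1/2)$ gives the following.
\begin{enumerate}[(i)]
  \item The image $\varphi(D(\zeta,1/2))$ contains the disc $D(w, r(w))$, where $r(w) = |\varphi'(\zeta)|/8$.
  \item There is an absolute constant $K$ such that, for $w' \in D(w, \varepsilon r(w))$,
  \[
  |\varphi^{-1}(w') - \zeta| \leq K\,|w'-w|/|\varphi'(\zeta)| .
  \]
\end{enumerate}
Hence, if $|f(z) - g(z)| < \eta(z)$ with
\[
\eta(z) := c\,\varepsilon\, r(g(z)), \qquad c = \min\{1, 1/(8K)\},
\]
then $f(z) \in V$ and $|\varphi^{-1}(f(z)) - \varphi^{-1}(g(z))| \leq \varepsilon$. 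The function $z \mapsto r(g(z))$ is continuous and strictly positive on $\overline{V}$, because $g$ is continuous there, $\varphi'$ does not vanish, and $g(\overline V)$ stays inside $V' \subset V$. So $\eta$ is a positive continuous error function on $\overline V$.

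\emph{Step 2: approximation with a variable error.} It remains to find an entire $f$ with $|f - g| < \eta$ on $\overline V$. Note that we need this only on $V$, and $\overline V$ has enough room since $g(\overline V)\subset V'$. By Theorem~\ref{WeierstrassSet}, $\overline V$ is a Weierstrass set, and Definition~\ref{Def} already gives approximation with a constant error. To reach a variable error I would use Arakelian's tangential version of the approximation theorem. This allows any error function $\eta>0$ that does not decay too quickly along $\overline V$, in the sense of a Carleman/Keldysh-type integrability condition on $\log \eta$.

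\emph{Step 3: the main obstacle.} The hard part is that $r(g(z))$ can decay very fast as $z \to \infty$ in $V$, for example in long thin parts of $V$. Arakelian's condition may then fail for $\eta$ itself. My first attempt would be to replace $\eta$ by a smaller continuous minorant $\eta_1 \leq \eta$ that satisfies the tangential condition, which would suffice if such a minorant exists. If that is not possible for general $V$, I would follow Rempe and Rippon and work in the half-plane coordinate instead. The idea is to approximate the logarithm of the relevant quantity, or $g$ composed with a suitable conformal change of variable on the unbounded part of $V$, with a \emph{bounded} error. That error is then converted back, through the distortion estimate of Step 1, into a hyperbolic error of at most $\varepsilon$. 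Doing this conversion cleanly is where I expect most of the work to lie.

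Once an entire $f$ with $|f - g| < \eta$ on $V$ is in hand, Step 1 immediately yields \eqref{ApproximationResultLasse}.
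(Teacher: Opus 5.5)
The paper gives no proof of this lemma; it quotes it from Rempe and Rippon, so your argument has to stand on its own. Your Step~1 is correct. By Koebe, any entire $f$ with $|f-g|<\eta$ on $\overline V$, where $\eta(z)=\tfrac{c\varepsilon}{8}|\varphi'(\varphi^{-1}(g(z)))|$, has the required properties.

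\textbf{The gap is Step~2.} This step carries all the content, and it is never established. Arakelian's tangential theorem needs a radial error $\varepsilon(|z|)$ with $\int_1^\infty t^{-3/2}\log\varepsilon(t)\,dt>-\infty$. Your $\eta$ violates this in exactly the case the paper needs. Take $V=S(a)$ and real $x\in[a_{n-1},a_n]$. Then $\varphi^{-1}(g(x))=\lambda e^{\Phi(x)}+1\geq\lambda e^{u_{n-1}}$, so Koebe gives $\eta(x)\le A e^{-u_{n-1}}$ for a constant $A$. Combined with Lemma~2.1(b) and $a_n\le Cn$, this gives $\eta(x)\le A'e^{-\kappa x}$ for large $x$ (doubly exponential decay if $a_n=\log n$), and the integral diverges.

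Your first remedy in Step~3 goes the wrong way. The tangential condition limits how \emph{fast} the error may decay, so if $\eta$ fails it, every minorant $\eta_1\le\eta$ fails it too. Your second remedy is only a pointer. Approximating a conjugate of $g$ does not produce an entire approximant of $g$ itself. A bounded error in some logarithm does not, by itself, give the additive bound $|f-g|<\eta$ that Step~1 needs.

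\textbf{The missing idea} is the standard multiplier trick, which uses only the constant-error Weierstrass property, applied twice.
\begin{enumerate}[(i)]
\item Since $\varphi'\circ\varphi^{-1}$ is holomorphic and zero-free on the simply connected domain $V$, it has a holomorphic logarithm there. Because $g(\overline V)\subset V'\subset V$, the function $L:=\log\bigl(\varphi'\circ\varphi^{-1}\bigr)\circ g$ is continuous on $\overline V$ and holomorphic on $V$, with $\Re L=\log|\varphi'(\varphi^{-1}(g))|$.
\item Choose an entire $H$ with $|H-L|<1$ on $\overline V$ and put $E:=\tfrac{c\varepsilon}{8}e^{H-1}$. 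Then $E$ is entire and zero-free, and $|E|<\tfrac{c\varepsilon}{8}e^{\Re L}=\eta$ on $\overline V$.
\item The quotient $g/E$ is continuous on $\overline V$ and holomorphic on $V$, so there is an entire $F$ with $|F-g/E|<1$ on $\overline V$.
\item Then $f:=EF$ satisfies $|f-g|=|E|\,|F-g/E|<\eta$ on $\overline V$.
\end{enumerate}
Your Step~1 now yields $f(V)\subset V$ and $|\varphi^{-1}(f(z))-\varphi^{-1}(g(z))|\le\varepsilon$. With this replacing Steps~2--3, the proof is complete, and no growth condition on $\eta$ is needed at all.
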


Now, we state a theorem of Carathéodory \cite[Thm 3.1]{Garnett&Marshall} that shows the conformal mappings we use have a continuous extension to the boundary, which we need in order to apply Lemma 3.3.

\begin{thm}[Carathéodory]\label{Carathéodory}
    Let $\varphi$ be a conformal mapping from the unit disc $\mathbb{D}$ onto a Jordan domain $\Omega$. Then $\varphi$ has a continuous extension to $\overline{\mathbb{D}}$, and the extension is a one-to-one map from $\overline{\mathbb{D}}$ onto $\overline{\Omega}$.
\end{thm}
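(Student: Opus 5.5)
The plan is the classical argument: first show that $\varphi$ extends continuously to $\overline{\mathbb{D}}$, then show that the extension is injective, and finally deduce surjectivity onto $\overline{\Omega}$ from compactness. The only geometric input about $\Omega$ is that $\partial\Omega$ is a Jordan curve $\Gamma$. It is therefore locally connected in the strong sense: for every $\eta>0$ there is $\delta>0$ such that any two points of $\Gamma$ at distance less than $\delta$ are joined by a subarc of $\Gamma$ of diameter less than $\eta$ (uniform continuity of the inverse of a parametrisation of $\Gamma$). We may assume $\Omega$ is bounded, after a M\"obius change of coordinates if necessary.

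\textbf{Step 1: continuity at the boundary (the main obstacle).} Fix $\zeta_0\in\partial\mathbb{D}$ and $0<r<1$. For $\rho\in(r^2,r)$, let $\gamma_\rho=\{z\in\mathbb{D}:|z-\zeta_0|=\rho\}$. By Cauchy--Schwarz and the area formula,
\[
\int_{r^2}^{r}\bigl(\mathrm{length}\,\varphi(\gamma_\rho)\bigr)^2\frac{d\rho}{\rho}\le \pi\int_{r^2}^{r}\!\!\int_{\gamma_\rho}|\varphi'|^2\,\rho\,d\theta\,d\rho\le \pi\,\mathrm{Area}(\Omega)<\infty .
\]
Since $\int_{r^2}^{r}d\rho/\rho=\log(1/r)$, there is $\rho\in(r^2,r)$ with $\mathrm{length}\,\varphi(\gamma_\rho)^2\le \pi\,\mathrm{Area}(\Omega)/\log(1/r)$, and this tends to $0$ as $r\to0$ (Wolff's lemma). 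The image $\varphi(\gamma_\rho)$ is a curve of small length $L$. Its two endpoints lie on $\Gamma$, because $\varphi$ is proper and so sends sequences tending to $\partial\mathbb{D}$ to sequences tending to $\partial\Omega$.

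By the local connectivity of $\Gamma$, these endpoints are joined by a subarc $\sigma\subset\Gamma$ of small diameter. The Jordan curve $\varphi(\gamma_\rho)\cup\sigma$ bounds a region of small diameter. The image $\varphi(\mathbb{D}\cap\{|z-\zeta_0|<\rho\})$ is connected and disjoint from $\varphi(\gamma_\rho)$, so it lies either in this small region or in its exterior. Choosing $r$ small and using the area bound (or the fact that $\varphi(0)$ lies outside a small neighbourhood of $\Gamma$) excludes the exterior. Hence the oscillation of $\varphi$ on $\mathbb{D}\cap\{|z-\zeta_0|<\rho\}$ tends to $0$ as $r\to0$. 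This gives a limit $\varphi(\zeta_0)\in\Gamma$ and continuity on $\overline{\mathbb{D}}$, with uniformity in $\zeta_0$ following from the same estimate. The delicate point is the argument that the small crosscut really cuts off the small piece rather than the large one; I would handle it by noting that the complementary piece contains $\varphi(0)$ and has diameter bounded below.

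\textbf{Step 2: injectivity.} Suppose $\varphi(\zeta_1)=\varphi(\zeta_2)=w_0$ with $\zeta_1\ne\zeta_2$ on $\partial\mathbb{D}$. The radii from $0$ to $\zeta_1$ and to $\zeta_2$ map to a Jordan curve $J=\varphi([0,\zeta_1]\cup[0,\zeta_2])$ that meets $\Gamma$ only at $w_0$. The open disc splits into two sectors $D_1,D_2$, and their images $\varphi(D_1),\varphi(D_2)$ are the two components of $\Omega\setminus J$. One of them, say $\varphi(D_1)$, lies in the bounded component of $\mathbb{C}\setminus J$. Since $\Gamma\setminus\{w_0\}$ is connected and misses $J$, all of it lies on one side of $J$, which forces $\overline{\varphi(D_1)}\cap\Gamma\subset\{w_0\}$. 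So $\varphi\equiv w_0$ on the arc of $\partial\mathbb{D}$ bounding $D_1$, a set of positive length. By the Riesz uniqueness theorem (or Schwarz reflection after a M\"obius map), $\varphi$ is then constant, a contradiction.

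\textbf{Step 3: surjectivity.} The extended map is continuous on the compact set $\overline{\mathbb{D}}$, so $\varphi(\overline{\mathbb{D}})$ is compact and contains $\Omega$. Hence $\varphi(\overline{\mathbb{D}})$ contains $\overline{\Omega}$, and by Step 1 it is contained in $\overline{\Omega}$. A continuous bijection from a compact space onto a Hausdorff space is a homeomorphism, which completes the argument.
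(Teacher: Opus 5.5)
The paper does not prove this theorem; it only cites Garnett and Marshall. Your argument (length--area estimate for boundary continuity, the Jordan curve through two radii plus reflection for injectivity, compactness for surjectivity) is correct and is essentially the standard proof found in that reference.

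One step should be made explicit, because it settles both the ``delicate point'' of Step 1 and your unargued claims in Step 2 that one sector lies inside $J$ while $\Gamma\setminus\{w_0\}$ lies outside. Knowing only that the piece containing $\varphi(0)$ is outside $J$ does not by itself put the other piece inside. What you need is that the bounded component of $\mathbb{C}\setminus J$ is a nonempty connected subset of $\Omega$, since the unbounded component of $\mathbb{C}\setminus\Gamma$ misses $J\subset\overline{\Omega}$. It therefore lies in, and hence equals, one of the two pieces of $\Omega\setminus J$.
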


We would like to apply Lemma 3.3  to the model function 

\begin{equation}
    \nonumber g(z) = \Phi^{-1} ( \log ( h ( \exp ( \Phi(z))))),
\end{equation} 

\noindent which will give us an entire function $f: \mathbb{C} \rightarrow \mathbb{C}$ that shares similar dynamics to $g$ on $V = S(a)$, where $S(a)$ is as in Figure 1. From the model we can see that $S(a) \subset \mathbb{C}$ is simply connected, and $\overline{S(a)}$ is a Weierstrass set since $\hat{\mathbb{C}} \backslash \overline{S(a)}$ is connected and locally connected at $\infty$. As in the model we also have a Riemann map $\varphi: \mathbb{H} \rightarrow S(a)$, where $\varphi(z) = \Phi^{-1} ( \textnormal{Log}z)$. Since both $\mathbb{H}$ and $S(a)$ are Jordan domains, by Carathéodory's Theorem \eqref{Carathéodory}, the Riemann map $\varphi(z) = \Phi^{-1}(\textnormal{Log}z)$ has a continuous extension to the respective boundaries $\overline{\mathbb{H}}$ and $\overline{S(a)}$. The function $g: \overline{S(a)} \rightarrow \mathbb{C}$ is continuous on $\overline{S(a)}$ and holomorphic on $S(a)$ and we also have that $g(\overline{S(a)}) \subset S(a)'$, where $S(a)' = \varphi(\{\zeta \in \mathbb{H} : \Re(\zeta) \geq 1 \} )$. We note that this is true since $g: S(a) \rightarrow S(a)$ is conjugate to $h : \mathbb{H} \rightarrow \mathbb{H}$ under $\Phi^{-1} ( \textnormal{Log})$, $h(\mathbb{H}) \subset \{\zeta \in \mathbb{H}: \Re(\zeta) > 1 \}$ and Carathéodory's Theorem \eqref{Carathéodory} continuously extends this to the boundary. Hence, we can apply Lemma 3.3 with $V = S(a)$, $\overline{V} = \overline{S(a)}$ and $V' = S(a)'$. 

We now need to check to what extent applying this approximation will change the dynamics in $\mathbb{C}$ of the resulting function $f$.

From Lemma 3.3, we have that, for $z \in S(a)$, $f(z) \in S(a)$ and $|\varphi^{-1}(f(z)) - \varphi^{-1}(g(z))| \leq \varepsilon$, where $0 < \varepsilon < 1$. We let

\begin{equation}
    \nonumber
    \begin{split}
        G(z) & = \varphi^{-1} ( g ( \varphi(z))) = \lambda z + 1 \\
        F(z) & = \varphi^{-1} ( f ( \varphi(z))),
    \end{split}
\end{equation}

\noindent both of which map $\mathbb{H}$ into itself. We also note that 
\begin{equation}
    \nonumber G^n(1) = \varphi^{-1} ( g^n(0)) = \lambda^n + \cdot \cdot \cdot + \lambda + 1, 
\end{equation}

\noindent since $\varphi(1) = \Phi ( \textnormal{Log} \ 1) = \Phi(0) = 0$. Then from \eqref{ApproximationResultLasse} we have that,

\begin{equation}
    \nonumber |F(1)  - G(1)| = |\varphi^{-1}(f(0)) - \varphi^{-1}(g(0))| \leq \varepsilon.
\end{equation}

\noindent By the triangle inequality, and \eqref{ApproximationResultLasse},

\begin{equation}
    \nonumber
    \begin{split}
        |F^2(1) - G^2(1)| & \leq | F(F(1)) - G(F(1))| + |G(F(1)) - G(G(1))| \\
        & \leq \varepsilon + |G(F(1)) - G(G(1))| \\
        & = \varepsilon + | \lambda F(1) + 1 - (\lambda G(1) + 1)| \\
        & = \varepsilon + \lambda|F(1) - G(1)| \\
        & \leq \varepsilon + \lambda \varepsilon = \varepsilon(1 + \lambda) = \varepsilon G(1).
    \end{split}
\end{equation}

\noindent Applying this argument repeatedly, we obtain, for $n \geq 1$,

\begin{equation}
    \nonumber |F^n(1) - G^n(1)| \leq \varepsilon G^{n-1}(1),
\end{equation}

\noindent so
\begin{equation}\label{Symmetry}
        \left|\frac{F^n(1)}{G^n(1)} - 1 \right| \leq \varepsilon \frac{G^{n-1}(1)}{G^n(1)}.
\end{equation}

\noindent We now note, since the set $V$ and the function $g$ are symmetric with respect to the real axis, that we can take the approximating entire function $f$ to be symmetric about the real axis \cite[page 34]{Symmetric}. Since the values of $G^n(1)$ come from $\varphi(g^n(0))$ which are the forward iterates of $0$ in $S(a)$ and fall on the real axis, we have that $G^n(1)$ are on the real axis in $\mathbb{H}$. Hence, we also have that $F^n(1)$ are on the real axis in $\mathbb{H}$ and so from \eqref{Symmetry} we have

\begin{equation}
    \nonumber 1 - \varepsilon \frac{G^{n-1}(1)}{G^n(1)} \leq \frac{F^n(1)}{G^n(1)} \leq 1 + \varepsilon \frac{G^{n-1}(1)}{G^n(1)}.
\end{equation}

\noindent Since

\begin{equation}
    \nonumber \frac{G^{n-1}(1)}{G^n(1)} = \frac{G^{n-1}(1)}{\lambda G^{n-1}(1) + 1} \leq \frac{1}{\lambda},
\end{equation}

\noindent we obtain

\begin{equation}
    \nonumber 1 - \frac{\varepsilon}{\lambda} \leq \frac{F^n(1)}{G^n(1)} \leq 1 + \frac{\varepsilon}{\lambda}.
\end{equation}

\noindent Now to see how this inequality affects the dynamics of $f$ in $S$, we take the logarithm,

\begin{equation}
    \nonumber -\frac{2\varepsilon}{\lambda} \leq \log( 1 - \frac{\varepsilon}{\lambda}) \leq \log\left( \frac{F^n(1)}{G^n(1)} \right) \leq \log(1 + \frac{\varepsilon}{\lambda}) \leq \frac{\varepsilon}{\lambda}, \textnormal{ for } \varepsilon \textnormal{ small}.
\end{equation}

\noindent So we have,
\begin{equation}
    \nonumber \left|\log \left(\frac{F^n(1)}{G^n(1)} \right) \right| \leq \frac{2\varepsilon}{\lambda};
\end{equation}

\noindent that is,

\begin{equation}
        |\log (F^n(1)) - \log(G^n(1)) | \leq \frac{2\varepsilon}{\lambda}.
\end{equation}

\noindent We note that $\log(F^n(0))$ and $\log(G^n(0))$ have the orbit of their iterates of $\tilde{h}$ on $\mathbb{R} \subset S$. We also note that $\frac{2\varepsilon}{\lambda}$ is a fixed constant for all $n$ and can be as small as we like. So on $S$ we have that $\tilde{h}^n(0) = \pi n + \delta_n + \varepsilon_n$ where $|\varepsilon_n| \leq \frac{2 \varepsilon}{\lambda}$. From \eqref{Mapping0inModel}, we have that $\delta_n = \log\left(1 + \sum_{k=1}^n 1/\lambda^k \right) = \log\left(1 + \sum_{k=1}^n 1/e^{\pi k} \right)$, since $\lambda = e^\pi$. We have that $\frac{1}{e^\pi} < \sum_{k=1}^n \frac{1}{(e^\pi)^k} < 1$ and so,
\begin{equation}
    \nonumber \log\left( 1 + \frac{1}{e^\pi} \right) \leq \log \left(1 +\sum_{k=1}^n \frac{1}{e^{\pi k}} \right) < \log(1 + 1) < 0.7.
\end{equation}

\noindent Hence, if we take $\varepsilon$ small enough that $\frac{2\varepsilon}{\lambda} < \log\left( 1 + \frac{1}{e^\pi} \right)$, then we have that for all $n \in \mathbb{N}$, $0 < \delta_n +\varepsilon_n < 1$, say $\delta_n'$. We note that if we go through the arguments in Section 3.2 and replace $\delta_n$ with $\delta_n'$ then we obtain the same conclusion. Hence, we have that, for some $N \geq 16$,

\begin{equation}
    \nonumber \frac{a_n}{5} \leq |f^n(0)| \leq 5 a_n, \ \textnormal{for}  \ n \geq N.
\end{equation}

By Lemma 3.3, we have that $f(S(a)) \subset S(a)$ and we have just shown that $|f^n(0)| \rightarrow \infty$ as $n \rightarrow \infty$, we deduce that there exists a Baker domain $U$ such that $S(a) \subset U$.

We have shown that a single point of the Baker domain $U$, namely $0$, has property \eqref{Iaa}, specifically that it lies in $I_a^a(f)$. To extend this and show all points in the Baker domain are contained in $I_a^a(f)$ we use the following distortion lemma \cite[Lemma 7, equation (5)]{Bergweiler}.

\begin{lem}
    \textit{Let $G$ be an unbounded domain in $\mathbb{C}$ with at least two finite boundary points, and let $f$ be a function analytic in $G$ such that $f(G) \subset G$, $f^n(z) \rightarrow \infty$ as $n \rightarrow \infty$, for $z \in G$, and $f$ does not extend analytically to $\infty$ with $f(\infty) = \infty$. Suppose also that $\hat{\mathbb{C}}\backslash G$ contains a connected set $\Gamma$ such that $\{a, \infty\} \subset \Gamma$ for some $a \in \mathbb{C}$. Then, for any compact subset $K$ of $G$, there exist positive constants $C$ and $n_0$ such that,}

\begin{equation}
    |f^n(z')| \leq C|f^n(z)|, \ \textnormal{for} \ z, z' \in K, \ n \geq n_0.
\end{equation}

\end{lem}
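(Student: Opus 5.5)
We would prove this lemma (Bergweiler's distortion lemma) by comparing the hyperbolic metric of $G$ with the Euclidean one. The hypotheses are set up for this: $G$ omits a connected set $\Gamma$ joining a finite point $a$ to $\infty$.

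\textbf{Step 1: normalise.} Put $c=\operatorname{diam}$-type data aside and translate so that $a=0$; then $\Gamma\ni 0,\infty$.

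\textbf{Step 2: the density estimate.} Since $\Gamma$ is connected and joins $0$ to $\infty$, every circle $|w|=r$ meets $\hat{\mathbb{C}}\setminus G$. We then use the standard estimate, valid for domains whose complement meets every circle centred at $0$ (via Koebe's one-quarter theorem applied to the universal covering, or the Beurling-type lemma):
\[
\rho_G(w)\geq \frac{1}{C_0\,|w|\,(1+|\log|w||)}\quad\text{for } w\in G\setminus\{0\},
\]
for some absolute constant $C_0$. For $|w|$ large it is enough to have $\rho_G(w)\ge c/(|w|\log|w|)$. Here the two finite boundary points guarantee that $G$ is hyperbolic.

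\textbf{Step 3: bound the images of $K$.} Since $f(G)\subset G$, the Schwarz--Pick lemma gives
\[
\rho_G(f^n(z),f^n(z'))\le \rho_G(z,z')\le D_K
\]
for all $z,z'\in K$. Next, $f^n\to\infty$ locally uniformly on $G$: the iterates tend to $\infty$ pointwise, and by normality (Montel, since $G$ omits two points) the convergence is locally uniform. So $|f^n(w)|>e$ on $K$ for $n\ge n_0$.

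\textbf{Step 4: integrate along a geodesic.} For each $n\ge n_0$, join $f^n(z)$ to $f^n(z')$ by a curve of hyperbolic length at most $2D_K$. Integrating the density bound along this curve controls $\log\log|w|$. Writing $t=\log|w|$, the quantity $ds/(|w|\log|w|)$ is at least $|d\log\log|w||$, so
\[
|\log\log|f^n(z)|-\log\log|f^n(z')||\le 2D_K/c.
\]
There is one subtlety: the curve might pass near $0$ or through the region $|w|<e$. We handle it with the full density bound, whose integral $\int dr/(r(1+|\log r|))$ diverges near $0$. Consequently a curve of bounded hyperbolic length cannot travel from $|w|>e^{T}$ down into $|w|<e$ once $T$ is large. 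So for $n$ large the whole curve stays in $|w|>e$.

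\textbf{Step 5: the main obstacle.} The estimate in Step 4 only gives $|f^n(z')|\le|f^n(z)|^{M}$, which is weaker than the required linear bound. To upgrade it we must use the hypothesis that $f$ does not extend analytically to $\infty$ with $f(\infty)=\infty$. We would proceed in three parts:

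1. Pass to the logarithmic change of variable and use the hyperbolic step lengths $\rho_G(f^{n}(z),f^{n+1}(z))$, which decrease. If $\rho_G(f^n(z),f^n(z'))$ does not tend to $0$, then along the orbit the local scale satisfies $\operatorname{dist}(f^n(z),\partial G)\asymp|f^n(z)|$.
2. Show that comparability of $\rho_G$ with $1/|w|$ holds along the orbit, in the sense that the $\log\log$ factor is not needed. This is what uses the non-extendability assumption: if instead $\rho_G\approx 1/(|w|\log|w|)$ along orbits, then $f$ would behave like a power map near $\infty$ in $G$ and would extend, by a Picard/removable-singularity argument.
3. With $\rho_G(w)\ge c/|w|$ along the relevant curves, integrating gives $|\log|f^n(z)|-\log|f^n(z')||\le C'$, which is the desired bound with $C=e^{C'}$.

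I expect the dichotomy argument in part 2, which removes the $\log$ factor, to be the delicate step.
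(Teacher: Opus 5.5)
There is a genuine gap, and it is the step you flagged: Step 5 is not an argument. Its two key claims are unsupported. The first is that if $\rho_G(f^n(z),f^n(z'))\not\to0$ then $\operatorname{dist}(f^n(z),\partial G)\asymp|f^n(z)|$. The second is that a density of order $1/(|w|\log|w|)$ along orbits would make $f$ ``behave like a power map'' and hence extend analytically to $\infty$. There is no evident mechanism by which a statement about the hyperbolic metric of $G$ could force analytic continuation of $f$.

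The real source of the trouble is Step 2. The bound $\rho_G(w)\ge 1/(C_0|w|(1+|\log|w||))$ is what one gets for an arbitrary domain omitting $0$ and one other point. It throws away exactly the information carried by $\Gamma$. Once $\Gamma$ is used, no $\log$ factor appears and the non-extendability hypothesis plays no role in this estimate. Indeed, since $\Gamma$ meets every circle about $a$, $G$ contains no punctured neighbourhood of $\infty$ at all.

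The missing idea is as follows. Take $a=0$ and $w\in G$. The connected set $\Gamma\ni 0,\infty$ meets the circle $|\zeta|=2|w|$ at some point $b$, so $G\subset\mathbb{C}\setminus\{0,b\}$. Comparison together with the scaling $\zeta\mapsto\zeta/b$ then gives $\rho_G(w)\ge c_0/|w|$, where $c_0=\frac12\min_{|\xi|=1/2}\rho_{\mathbb{C}\setminus\{0,1\}}(\xi)>0$.

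Alternatively, $G$ lies in the component $W$ of $\hat{\mathbb{C}}\setminus\overline{\Gamma}$ containing it. This is a simply connected proper subdomain of $\mathbb{C}$ omitting $0$, and Koebe's theorem (curvature $-1$) gives $\rho_G(w)\ge\rho_W(w)\ge 1/(2\operatorname{dist}(w,\partial W))\ge 1/(2|w|)$. This is the standard argument behind the estimate in Bergweiler's lemma, which the paper simply quotes without proof.

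With this bound, your Steps 3--4 finish the proof at once:
\begin{itemize}
\item Integrate $c_0|dw|/|w|$ along a curve in $G$ joining $f^n(z)$ to $f^n(z')$ of hyperbolic length at most $D_K+1$. This gives $\bigl|\log|f^n(z')|-\log|f^n(z)|\bigr|\le (D_K+1)/c_0$ for every $n$.
\item The curve cannot pass through $0\notin G$, so your concern about it dipping into $|w|<e$ disappears.
\item The locally uniform escape from Step 3 is needed only to undo the translation by $a$ uniformly on $K$. That is where $n_0$ comes from.
\end{itemize}
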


\noindent Hence, by Lemma 3.5 we have that since $\frac{a_n}{5} \leq |f^n(0)| \leq  5 a_n$, there exists for all $z \in U$ positive constants $C(z) > 1$ and $n_0(z) \in \mathbb{N}$ such that,

\begin{equation}
    \nonumber \frac{a_n}{5C(z)} \leq |f^n(z)| \leq 5C(z)a_n, \ \textnormal{for} \ n \geq n_0(z),
\end{equation}

\noindent as required.

Finally, in the proof we assumed that condition \eqref{TheoremCondition} in Theorem 1.1 is true for $n \geq 2$. We can deduce the general case as follows. If \eqref{TheoremCondition} holds for $n \geq n_0$, where $n_0 > 2$, then we put $\tilde{a}_0 = 0$ and
\begin{equation}
    \nonumber \tilde{a}_n = a_{n+n_0-3} +\alpha, \quad \textnormal{for } n \geq 1,
\end{equation}

\noindent where we take $\alpha \in \mathbb{R}$ such that $\tilde{a}_n$ satisfies \eqref{TheoremCondition} for $n \geq 2$. We can then construct a function $f$ with a Baker domain $U$ such that, for $z \in U$, there exists  $C = C(z) > 1$ and $N = N(z) \in \mathbb{N}$ such that
\begin{equation}
    \nonumber \frac{\tilde{a}_n}{C} \leq |f^n(z)| \leq C \tilde{a}_n, \quad \textnormal{for } n \geq N.
\end{equation}

\noindent Now, for $n \geq n_0$,
\begin{equation}
    \nonumber a_n + \alpha \leq \tilde{a}_n = a_{n+n_0-3} + \alpha \leq a_n + (n_0 - 3)(a_n - a_{n-1}) + \alpha \leq n_0 a_n + \alpha,
\end{equation}

\noindent since $a_{n} - a_{n-1} \geq a_{n+1} - a_{n}$ for $n \geq n_0$. Now we can take a constant $D > 1$ such that
\begin{equation}
    \nonumber \frac{a_n}{D} \leq a_n + \alpha, \quad \textnormal{and} \quad n_0a_n + \alpha \leq Dn_0a_n,
\end{equation}

\noindent and so we can conclude that, since $n_0 > 2$,
\begin{equation}
    \nonumber \frac{a_n}{\tilde{C}(z)} \leq |f^n(z)| \leq \tilde{C}(z) a_n, \quad \textnormal{for } n \geq N, 
\end{equation}

\noindent where $\tilde{C}(z) = D n_0 C(z)$.

\section{Proof of remarks}

In this section, we will give two theorems that address Remark 1.2 and Remark~1.3. The first will show that we can apply the logarithm to certain sequences to obtain examples of sequences which satisfy condition \eqref{TheoremCondition}. The second theorem will show that we can find sequences $(a_n)$ tending to $\infty$ arbitrarily slowly and satisfying \eqref{TheoremCondition}. This second theorem allows us to use Theorem 1.1 to construct a function with a Baker domain in which the iterates are controlled from both above and below in a prescribed arbitrarily slow but regular manner.

\begin{thm}
    Let $(a_n)$ be a positive strictly increasing sequence such that $a_n \rightarrow \infty$ as $n \rightarrow \infty$,

    \begin{equation}\label{(1)}
        \frac{a_{n-1} - a_{n-2}}{a_n - a_{n-1}} > 1, \quad \textnormal{for } n \geq 2,
    \end{equation}

    \begin{equation}\label{(2)}
        \frac{a_n}{a_{n-1}} \rightarrow 1 \quad \textnormal{and} \quad  \frac{a_{n-1} - a_{n-2}}{a_n - a_{n-1}} \rightarrow 1 \ \textnormal{ as } n \rightarrow \infty.
    \end{equation}

    \noindent Then the sequence $b_n = \log a_n$ also satisfies \eqref{(1)} and \eqref{(2)}, for all $n \in \mathbb{N}$ such that $\log a_n > 0$.
\end{thm}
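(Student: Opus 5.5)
The plan is to turn both conditions into statements about ratios $a_n/a_{n-1}$. Throughout, write $d_n = a_n - a_{n-1} > 0$, and restrict to indices for which $a_{n-2} > 1$, so that all the logarithms involved are positive. Then
\[
b_n - b_{n-1} = \log\frac{a_n}{a_{n-1}} > 0 ,
\]
so $(b_n)$ is positive and strictly increasing. It tends to $\infty$ because $a_n \to \infty$.

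\textbf{Condition \eqref{(1)} for $(b_n)$.} Since both differences of $(b_n)$ are positive, the required inequality
\[
\frac{b_{n-1}-b_{n-2}}{b_n-b_{n-1}} > 1
\]
is equivalent to $\log(a_{n-1}/a_{n-2}) > \log(a_n/a_{n-1})$, i.e. to $a_{n-1}^2 > a_n a_{n-2}$ (strict log-concavity). To check this, substitute $a_n = a_{n-1} + d_n$ and $a_{n-2} = a_{n-1} - d_{n-1}$ and expand:
\[
a_n a_{n-2} = a_{n-1}^2 + a_{n-1}(d_n - d_{n-1}) - d_n d_{n-1}.
\]
Condition \eqref{(1)} for $(a_n)$ says exactly $d_{n-1} > d_n$, so the middle term is negative. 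The last term is negative because all $d_i > 0$ and $a_{n-1} > 0$. Hence $a_n a_{n-2} < a_{n-1}^2$, which gives \eqref{(1)} for $(b_n)$.

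\textbf{First limit in \eqref{(2)}.} We have $b_n - b_{n-1} = \log(a_n/a_{n-1}) \to 0$ by the hypothesis $a_n/a_{n-1} \to 1$. Since also $b_{n-1} \to \infty$,
\[
\frac{b_n}{b_{n-1}} = 1 + \frac{b_n - b_{n-1}}{b_{n-1}} \to 1 .
\]

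\textbf{Second limit in \eqref{(2)}.} Put $x_n = d_n/a_{n-1} > 0$, so that $b_n - b_{n-1} = \log(1+x_n)$. Then $x_n = a_n/a_{n-1} - 1 \to 0$, and
\[
\frac{b_{n-1}-b_{n-2}}{b_n-b_{n-1}} = \frac{\log(1+x_{n-1})}{x_{n-1}} \cdot \frac{x_n}{\log(1+x_n)} \cdot \frac{x_{n-1}}{x_n}.
\]
The first two factors tend to $1$ because $\log(1+x)/x \to 1$ as $x \to 0$. The third factor is
\[
\frac{x_{n-1}}{x_n} = \frac{d_{n-1}}{d_n} \cdot \frac{a_{n-1}}{a_{n-2}} ,
\]
and both of these ratios tend to $1$ by \eqref{(2)} for $(a_n)$. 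So the whole product tends to $1$.

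The only step needing an idea is the reformulation of \eqref{(1)} as log-concavity together with the expansion above. The rest consists of routine limit arguments. The one point of care is the range of indices: we need $a_{n-2} > 1$, which is what the phrase ``for all $n$ such that $\log a_n > 0$'' provides, applied to the earlier indices in each ratio.
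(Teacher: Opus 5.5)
Your proof is correct and is essentially the paper's argument. Your $x_n = d_n/a_{n-1}$ is the paper's $\delta_n = a_n/a_{n-1} - 1$, and your identity $x_{n-1}/x_n = \frac{d_{n-1}}{d_n}\cdot\frac{a_{n-1}}{a_{n-2}}$ is exactly the one the paper uses. The differences are cosmetic. You obtain \eqref{(1)} for $(b_n)$ by expanding $a_{n-1}^2 - a_n a_{n-2}$, whereas the paper reads $\delta_{n-1} > \delta_n$ directly off that identity. For the second limit you use $\log(1+x)/x \to 1$, whereas the paper squeezes the ratio between $1$ and $\frac{\delta_{n-1}}{\delta_n}(1+\delta_n)$ using $\frac{x}{1+x} \le \log(1+x) \le x$.
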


\begin{proof}
    Put $\frac{a_n}{a_{n-1}} = 1 + \delta_n$, so $\delta_n > 0$ and $\delta_n \rightarrow 0$ as $n \rightarrow \infty$, and

    \begin{equation}
    \nonumber 
    \begin{split}
        \frac{\delta_{n-1}}{\delta_n} & = \frac{\frac{a_{n-1}}{a_{n-2}} - 1}{\frac{a_n}{a_{n-1}} - 1} = \frac{a_{n-1}}{a_{n-2}} \left( \frac{a_{n-1} - a_{n-2}}{a_n - a_{n-1}} \right) > 1,  \\
        \frac{\delta_{n-1}}{\delta_n} & \rightarrow 1 \ \textnormal{ as } \ n \rightarrow \infty, \quad \textnormal{by \eqref{(1)} and \eqref{(2)}}. 
    \end{split}
    \end{equation}

\noindent Now,

\begin{equation}
    \nonumber \frac{b_{n-1} - b_{n-2}}{b_n - b_{n-1}} = \frac{\log \frac{a_{n-1}}{a_{n-2}}}{\log \frac{a_n}{a_{n-1}}} = \frac{\log(1+ \delta_{n-1})}{\log(1+\delta_n)} > 1,
\end{equation}

\noindent so \eqref{(1)} holds for the $(b_n)$.

To prove part \eqref{(2)} note that

\begin{equation}
    \nonumber \frac{x}{1+x} \leq \log(1+x) \leq x, \quad \textnormal{for } x \geq 0.
\end{equation}

\noindent Hence

\begin{equation}
    \nonumber \frac{\log(1+\delta_{n-1})}{\log(1+\delta_n)} \leq \frac{\delta_{n-1}}{\delta_n/(1+\delta_n)} = \frac{\delta_{n-1}}{\delta_n}(1+\delta_n) \rightarrow 1 \ \textnormal{ as } n \rightarrow \infty
\end{equation}

\noindent and

\begin{equation}
    \nonumber \frac{b_n}{b_{n-1}} = \frac{\log a_n}{\log a_{n-1}} = \frac{\log a_{n-1}(1+\delta_n)}{\log a_{n-1}} = 1 + \frac{\log(1+\delta_n)}{\log a_{n-1}} \rightarrow 1 \ \textnormal{ as } n \rightarrow \infty.
\end{equation}

\noindent So \eqref{(2)} also holds for the $(b_n)$.
\end{proof}

\begin{thm}
    Let $(\alpha_n)$ be a positive non-decreasing sequence such that $\alpha_n \rightarrow \infty$ as $n \rightarrow \infty$. Then there exists a positive increasing sequence $(a_n)$ with $a_n < \alpha_n$ for all sufficiently large $n$, $a_0 = 0$, $a_n \rightarrow \infty$ as $n \rightarrow \infty$ and the $(a_n)$ satisfy

    \begin{equation}\label{b_nCondition}
        1 \leq \frac{a_{n-1} - a_{n-2}}{a_n - a_{n-1}} < 2, \ \textnormal{ for } n \geq 2.
    \end{equation}
\end{thm}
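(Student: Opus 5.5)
The plan is to build $(a_n)$ through its increments $d_n = a_n - a_{n-1}$, exactly as in Section 2. In that notation, condition \eqref{b_nCondition} says $d_n \le d_{n-1} < 2d_n$ for $n \ge 2$. So I need a positive, non-increasing sequence $(d_n)$ that never drops by a factor of $2$ or more in one step, whose sum diverges, and whose partial sums $a_n = \sum_{i\le n} d_i$ stay below $\alpha_n$ for large $n$. The natural device is a sequence of blocks:
\begin{itemize}
\item a \emph{descent phase}, in which $d_n$ is multiplied by $2/3$ at each step, so the ratio is $3/2 \in [1,2)$;
\item followed by a \emph{plateau}, in which $d_n$ is constant, so the ratio is $1$.
\end{itemize}
A descent phase of any length costs only a geometric amount: starting from value $c$, its total contribution to $a_n$ is at most $\sum_{j\ge1} c(2/3)^j = 2c$. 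This is the key flexibility. We can make the increments as small as we like at bounded cost, while each plateau supplies a fixed positive amount, which forces $a_n \to \infty$.

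The inductive construction runs as follows. Set $a_0 = 0$ and $d_1 = 1$. Since $\alpha_n \to \infty$ and $(\alpha_n)$ is non-decreasing, for each $k \ge 1$ choose $M_k$ with $M_k > M_{k-1}$ and $\alpha_n > 4k + 10$ for all $n \ge M_k$. Suppose that stage $k$ begins at an index $N_k$ with current increment $c \le 1$ and $a_{N_k} \le 4k$.
\begin{enumerate}
\item Run a descent phase from $N_k$ until the increment first satisfies $c_k \le 1/(M_{k+1} - N_k)$. This adds at most $2c \le 2$ to $a$.
\item Hold $d_n = c_k$ constant for a plateau of length $L_k = \max\bigl(M_{k+1} - (\text{current index}),\ \lceil 1/c_k \rceil\bigr)$.
\end{enumerate}
The plateau contributes at least $1$. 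It also contributes at most $2$: either $L_k = \lceil 1/c_k\rceil$, giving at most $1 + c_k$, or $L_k \le M_{k+1} - N_k$, giving at most $c_k(M_{k+1}-N_k) \le 1$. Set $N_{k+1}$ to be the end of the plateau. Then $N_{k+1} \ge M_{k+1}$ and $a_{N_{k+1}} \le 4k + 4 = 4(k+1)$, so the induction continues. Because every plateau adds at least $1$, we get $a_n \to \infty$. The sequence is strictly increasing since every $d_n > 0$, and \eqref{b_nCondition} holds at every index $n \ge 2$, including the transitions between phases: each ratio $d_{n-1}/d_n$ is either $1$ or $3/2$, starting from $n=2$.

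The remaining point, and the one needing the most care in the indexing, is the comparison $a_n < \alpha_n$. For $n$ in stage $k$, that is $N_k \le n < N_{k+1}$, we have $a_n \le 4k + 4$. Since $N_k \ge M_k$ for $k \ge 2$, we have $\alpha_n \ge \alpha_{M_k} > 4k + 10$, and hence $a_n < \alpha_n$ for all $n \ge N_2$. I expect the main obstacle to be this bookkeeping: making sure the descent phase is bounded independently of its length and that the plateau length is chosen after $M_{k+1}$ is known. The geometric-sum bound handles the first, and choosing the $M_k$ from $\alpha$ before the increments handles the second. If "$a_n < \alpha_n$" is wanted only eventually, as stated, then nothing more is required.
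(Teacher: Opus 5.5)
Your estimates are all correct, but the construction as stated is not always well defined. The stage-$k$ descent stops once the increment is at most $1/(M_{k+1}-N_k)$, and this presupposes $M_{k+1}>N_k$. You fix all the $M_k$ in advance from $\alpha$, so nothing guarantees it. $N_k$ is the end of the previous plateau, whose length $\lceil 1/c_{k-1}\rceil$ is unrelated to the gap $M_{k+1}-M_k$.

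Concretely, take $\alpha_n=e^n$. The choice $M_k=k+2$ is admissible, since $e^{k+2}>4k+10$ for $k\ge 1$. Stage $1$ descends through $d_2=\tfrac23$, $d_3=\tfrac49$, $d_4=\tfrac{8}{27}\le\tfrac13$. It then runs a plateau of length $\max(0,\lceil 27/8\rceil)=4$, so $N_2=8>5=M_3$. In stage $2$ the threshold $1/(M_3-N_2)=-\tfrac13$ is never met, so the descent never ends. Then $a_n$ stays bounded and $a_n\to\infty$ fails.

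The repair is immediate. Either choose $M_{k+1}$ inside the induction, once $N_k$ is known, with $M_{k+1}>\max(M_k,N_k)$; enlarging $M_{k+1}$ preserves $\alpha_n>4(k+1)+10$ for $n\ge M_{k+1}$. Or simply use the threshold $c_k\le 1/M_{k+1}$. After either change your bounds go through as before: descent at most $2c$, plateau between $1$ and $2$, $N_{k+1}\ge M_{k+1}$, and $a_{N_{k+1}}\le 4(k+1)$.

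With that fix your proof is correct, but it takes a different route from the paper. The paper uses a feedback rule rather than a preset schedule. It sets $a_1=\min\{1,\alpha_1/3\}$, and $a_{n+1}-a_n$ equals $a_n-a_{n-1}$ if $a_n<\alpha_n-4$, and $\tfrac23(a_n-a_{n-1})$ otherwise. No auxiliary indices are needed. The bound $a_n<\alpha_n$ follows because after an index with $a_n<\alpha_n-4$, the sequence gains less than $3$ before the next such index; this is the same geometric-series bound you use. The price is a contradiction argument to show such indices recur, plus a case split to get $a_n\to\infty$. Your open-loop blocks need more index bookkeeping, as the slip shows. In return they give explicit two-sided control: each stage adds between $1$ and $4$, so divergence is immediate.
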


\begin{proof}
    We take $a_1 = \min\{1, \frac{\alpha_1}{3} \}$ and $a_0 = 0$ and define the rest of the $a_n$ as follows,
    \begin{numcases}{a_{n+1} = }
        a_n + (a_n - a_{n-1}), \quad & \textnormal{if} $a_n < \alpha_n - 4$, \label{=1}
        \\
        a_n + \frac{2}{3}(a_n - a_{n-1}), \quad & \textnormal{if} $a_n \geq \alpha_n - 4$. \label{=3/2}
    \end{numcases}

    \noindent From this, we observe that $a_n$ is positive and increasing since $a_1-a_0 = a_1 > 0$. Defining the $(a_n)$ in this way also ensures that they satisfy \eqref{b_nCondition}.

    We need to show that $a_n < \alpha_n$ for all $n \in \mathbb{N}$. First, we note that, if we define the $a_n$ in this way, then $a_{n+1} - a_n \leq a_n - a_{n-1}$. 
    
    Next suppose, for a contradiction, that $a_n \geq \alpha_n - 4$ for all $n \geq N$ say. Then, for $n \geq N$, $a_{n+1} - a_n = \frac{2}{3}(a_n - a_{n-1})$. Hence, for $n > N$,
    \begin{equation}\label{Sums_To_3}
        \begin{split}
            a_n & = (a_n - a_{n-1}) + (a_{n-1} - a_{n-2}) + \cdot\cdot\cdot + (a_{N+1} - a_N) + a_N \\
            & = \sum_{i = N+1}^{n}(a_i - a_{i-1}) + a_N \\
            & = \sum_{i = N+1}^{n} \left(\frac{2}{3}\right)^{i-N}(a_N - a_{N-1}) + a_N \leq 3(a_N - a_{N-1}) + a_N.
        \end{split}
    \end{equation}

    \noindent Since $a_1 - a_0 \leq 1$, by \eqref{=1} and \eqref{=3/2}, $a_N - a_{N-1} \leq 1$, and so, $a_n \leq a_N + 3$ for $n \geq N$. Since $a_n \geq \alpha_n - 4$ for all $n \geq N$ and $\alpha_n \rightarrow \infty$ as $n \rightarrow \infty$ this gives us a contradiction. Hence, there exists $(N_k)$ such that $a_{N_k} < \alpha_{N_k} - 4$ for all $k \in \mathbb{N}$. 
    
    Suppose now that $n \in \mathbb{N}$ is such that $N_k < n < N_{k+1}$. Then by \eqref{Sums_To_3} we have that
    \begin{equation}
        \nonumber a_n < a_{N_k} + 3 < \alpha_{N_k} - 4 + 3 < \alpha_{N_k} \leq \alpha_n,
    \end{equation}

    \noindent since $a_{N_k} - a_{N_k - 1} \leq a_1 - a_0 \leq 1$ and $(\alpha_n)$ is non-decreasing.

    Finally, we need to show that $a_n \rightarrow \infty$ as $n \rightarrow \infty$. We have two cases, either we will apply \eqref{=3/2} infinitely often, or there exists some $N \in \mathbb{N}$ such that for all $n \geq N$, we apply \eqref{=1}. If we apply \eqref{=3/2} infinitely often, then infinitely often we have that $a_n \geq \alpha_n - 4$ and since $\alpha_n \rightarrow \infty$ as $n \rightarrow \infty$, we also have $a_n \rightarrow\infty$ as $n \rightarrow \infty$. If for some $N \in \mathbb{N}$ we apply \eqref{=1} for all $n \geq N$, then we have that $a_{N+m} = a_N + m(a_N - a_{N-1})$ for $m \geq 1$, so $a_n \rightarrow \infty$ as $n \rightarrow \infty$.
\end{proof}

\begin{rem}
    Note that Theorem 4.2 implies that any positive sequence $(a_n)$ that tends to $\infty$ has a concave minorant with respect to $n$ that also tends to $\infty$.
\end{rem}

\newpage

\bibliography{refs}

\end{document}